\DeclareMathAlphabet{\mathpzc}{OT1}{pzc}{m}{it}
\newcommand{\red}{\color{red}}
        \definecolor{brown}{rgb}{1,0,1}
\numberwithin{equation}{section}
\newtheorem{theo}{Theorem}[section]
\newtheorem{prop}[theo]{Proposition}
\theoremstyle{definition}
\newtheorem{def1}[theo]{Definition}
\theoremstyle{remark}
\newtheorem{rema}[theo]{Remark}
\newcommand{\N}{{\mathbb N}}
\newcommand{\nwc}{\newcommand}
\nwc{\eps}{\epsilon}
\nwc{\vareps}{\varepsilon}
\nwc{\Oph}{\operatorname{Op}_\hbar}
\nwc{\ra}{\rangle}
\nwc{\la}{\lambda}
\nwc{\mf}{\mathbf} %Latex (as in \bf not tilted math letters)
\nwc{\blds}{\boldsymbol} %Latex 
\nwc{\ml}{\mathcal} %Latex
\nwc{\defeq}{\stackrel{\rm{def}}{=}}
\nwc{\cE}{\ml{E}}
\nwc{\cN}{\ml{N}}
\nwc{\cO}{\ml{O}}
\nwc{\cP}{\ml{P}}
\nwc{\cU}{\ml{U}}
\nwc{\cV}{\ml{V}}
\nwc{\cW}{\ml{W}}
\nwc{\tU}{\widetilde{U}}
\nwc{\IN}{\mathbb{N}}
\nwc{\IR}{\mathbb{R}}
\nwc{\IZ}{\mathbb{Z}}
\nwc{\IC}{\mathbb{C}}
\nwc{\IT}{\mathbb{T}}
\nwc{\tP}{\widetilde{P}}
\nwc{\tPi}{\widetilde{\Pi}}
\nwc{\tV}{\widetilde{V}}
\nwc{\supp}{\operatorname{supp}}
\nwc{\rest}{\restriction}
\newcommand{\R}{{\mathbb R}}
\newcommand{\Z}{{\mathbb Z}}
\renewcommand{\H}{{\mathbf H}}
\renewcommand{\phi}{\varphi}
\newcommand{\ep}{\varepsilon}
\newtheorem{cor}[theo]{{\sc Corollary}}
\newtheorem{lem}[theo]{{\sc Lemma}}
\title [Quantum ergodicity and $L^p$ restrictions] {Quantum ergodicity and $L^p$ norms of restrictions of eigenfunctions}
\author{Hamid Hezari }
\address{Department of Mathematics, UC Irvine, Irvine, CA 92617, USA} \email{hezari@math.uci.edu}
\begin{document}

%\keywords{eigenfunctions, quantum ergodicity, nodal sets, $L^p$ norms}

\maketitle

\begin{abstract}
We prove an analogue of Sogge's local $L^p$ estimates \cite{So15} for $L^p$ norms of restrictions of eigenfunctions to submanifolds, and use it to show that for quantum ergodic eigenfunctions one can get improvements of the results of Burq-G\'erard-Tzvetkov \cite{BuGeTz}, Hu \cite{Hu}, and Chen-Sogge \cite{ChSo}. The improvements are logarithmic on negatively curved manifolds (without boundary) and by $o(1)$ for manifolds (with or without boundary) with ergodic geodesic flows. In the case of ergodic billiards with piecewise smooth boundary, we get $o(1)$ improvements on $L^\infty$ estimates of Cauchy data away from a shrinking neighborhood of the corners, and as a result using the methods of \cite{GRS, ZeJJ1, ZeJJ}, we get that the number of nodal domains of $2$-dimensional ergodic billiards tends to infinity as $\la \to \infty$.  These results work only for a full density subsequence of any given orthonormal basis of eigenfunctions.
\\

\noindent We also present an extension of the $L^p$ estimates of \cite{BuGeTz, Hu, ChSo} for the restrictions of Dirichlet and Neumann eigenfunctions to  compact submanifolds of the interior of manifolds with piecewise smooth boundary.  This part does not assume ergodicity on the manifolds. 
\end{abstract}

\section{Introduction}

The quantum ergodicity results of \cite{Sh, CdV, Ze87}, and the small scale quantum ergodicity results of \cite{Han, HeRi} on negatively curved manifolds\footnote{See \cite{LuSa, Yo, HeRiTorus, LeRu} for parallel results in the arithmetic setting.} are recently shown to imply improvements for several measurements of eigenfunctions such as: $L^p$ norms \cite{HeRi, So15}, number of nodal domains \cite{ZeN}, growth rates, the size of nodal sets, order of vanishing of eigenfunctions \cite{He16a}, and also the inner radius of nodal domains \cite{He16b}. The purpose of this article is to prove  another application of $L^2$ equidistribution of eigenfunctions.  We will show that $L^p$ estimates of restrictions of eigenfunctions to submanifolds  can be improved according to certain powers of $r(\la)$, where $r(\la)$ is the least radius  of shrinking geodesic balls on which the eigenfunctions equidistribute uniformly.

Let $(X,g)$ be a compact connected smooth Riemannian manifold of dimension $n \geq 2$, with or without boundary. When $\partial X \neq \emptyset$, we assume that $\partial X$ is piecewise \footnote{See Section \ref{PS2} for the precise definition.} smooth. We denote the singular part of the boundary by $\mathcal S$.  Let $\Sigma$ be a compact smooth submanifold \footnote{ $\Sigma$ can have a boundary, in which case we assume its boundary is smooth.} of dimension $k$ of the interior $X \backslash \partial X$, or of the regular part of the boundary $\partial X  \backslash \mathcal S$.  The metric $g$ induces natural volume measures on $X$ and $\Sigma$, which we denote by $d_gv$ and $d_g \sigma$ respectively and denote $L^p(X)$ and $L^p(\Sigma)$ to be the corresponding $L^p$ spaces.   Suppose $\Delta_g$ is the positive Laplace-Beltrami operator on $(X, g)$ (with Dirichlet or Neumann boundary conditions if $\partial X \neq \emptyset$).  For $\lambda \geq 1$, we define the spectral cluster operators $\Pi_\lambda=\bigoplus_{ \sqrt{\lambda_j} \in\,[\sqrt{\lambda}, \sqrt{\lambda}+1]} \Pi_{E_{\lambda_j}}$, where $\Pi_{E_{\lambda_j}}$ is the orthogonal projection operator onto the eigenspace $E_{\lambda_j}=\, \text{ker}\, (\Delta_g - \lambda_j)$. 

\subsection{\textbf{Main tool: local $L^p$ restriction estimates} } 

The main lemma  that enables us to prove our results is the following analogue of Sogge's local $L^p$ estimates \cite{So15} for local $L^p$ norms of restrictions of eigenfunctions of $\Delta_g$ to $\Sigma$. 

\begin{lem} \label{LocalLp}Let $(X,g)$ and $\Sigma$ be as above and assume $\partial X$ is smooth if it is non-empty. Also let $2 \leq p \leq \infty$ and $r_0 = \text{inj}(X, g)$. Suppose there exists $\delta=\delta(n, k, p)$ and $C >0$ independent of $\lambda$, such that for all $\lambda \geq 1$
\begin{equation}\label{NormOfSpectralCluster}
||\Pi_\lambda ||_{L^2(X) \to L^p(\Sigma)} \leq C \lambda^{\delta}.
\end{equation}
Then for any eigenfunction $\psi_{\lambda_j} \in E_{\lambda_j}$ with $\lambda_j \geq 1$, any $x \in \Sigma$, and any $r \in [\lambda_j^{-1/2}, r_0/2]$, there exists a constant $C_1$, dependent only on $C$ and $\delta$, such that
\begin{equation} \label{LocalLpEstimates}
||\psi_{\lambda_j} ||_{L^p(B_{r/2}(x) \cap \Sigma)} \leq C_1 r^{-\frac{1}{2}} || \psi_{\lambda_j}||_{L^2(B_{r}(x))} \lambda_j^\delta . 
\end{equation}
\end{lem}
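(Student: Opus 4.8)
The plan is to reduce the local estimate \eqref{LocalLpEstimates} to the global bound \eqref{NormOfSpectralCluster} via a rescaling argument combined with a Fourier-integral (or wave-kernel) localization, following the strategy of Sogge's local $L^p$ estimates in \cite{So15}. First I would fix $x\in\Sigma$ and $r\in[\lambda_j^{-1/2}, r_0/2]$, and introduce a cutoff $\chi\in C_c^\infty(B_r(x))$ with $\chi\equiv 1$ on $B_{r/2}(x)$ and with derivatives satisfying $|\nabla^m\chi|\lesssim r^{-m}$. The goal is to bound $\|\psi_{\lambda_j}\|_{L^p(B_{r/2}(x)\cap\Sigma)}\le\|\chi\psi_{\lambda_j}\|_{L^p(\Sigma)}$ by applying a spectrally-localized operator to $\chi\psi_{\lambda_j}$ and invoking \eqref{NormOfSpectralCluster}.

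Next I would rescale: since $r\ge\lambda_j^{-1/2}$, set $\mu=\lambda_j^{1/2}r\ge 1$ and pass to geodesic normal coordinates centered at $x$, dilating by $r$ so that $B_r(x)$ becomes a ball of radius $1$. In these coordinates the rescaled metric $\tilde g$ has bounded geometry uniformly in $r$ (this is where $r\le r_0/2 = \operatorname{inj}(X,g)/2$ is used, so normal coordinates are valid on the whole ball), and $\psi_{\lambda_j}$ becomes an approximate eigenfunction of $\Delta_{\tilde g}$ at frequency $\mu$. One then builds, via a fixed even bump function $\hat\rho\in C_c^\infty$ with $\rho(0)=1$, the reproducing operator $\rho(\mu-\sqrt{\Delta_{\tilde g}})$ acting on $\chi\psi_{\lambda_j}$; the point is that $\rho(\mu-\sqrt{\Delta_{\tilde g}})(\chi\psi_{\lambda_j})$ agrees with $\chi\psi_{\lambda_j}$ up to an error that is negligible because the wave propagator has finite speed and $\chi\psi_{\lambda_j}$ is already an approximate eigenfunction — the commutator $[\chi,\Delta]$ contributes terms supported where $\chi$ varies, controlled by the $L^2$ mass of $\psi_{\lambda_j}$ on $B_r(x)$ with the stated $r^{-1/2}$ loss once one undoes the rescaling. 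Applying \eqref{NormOfSpectralCluster} (in the rescaled picture, where it holds with the same constants since the geometry is uniformly controlled, or by summing over $O(1)$ unit spectral windows near $\mu$) to $\rho(\mu-\sqrt{\Delta_{\tilde g}})$ gives the $L^p(\Sigma)\to L^2$ bound with the power $\mu^\delta$, and tracking the Jacobian factors from the dilation in both the $L^2(X)$ norm ($r^{n/2}$) and the $L^p(\Sigma)$ norm ($r^{-k/p}$) together with $\mu^\delta = (\lambda_j^{1/2}r)^\delta$ should assemble into the claimed bound $r^{-1/2}\|\psi_{\lambda_j}\|_{L^2(B_r(x))}\lambda_j^\delta$ after checking that the exponents of $r$ combine correctly — this bookkeeping is routine but must be done carefully, and is essentially forced by the requirement that the estimate be scale-invariant and reduce to Sogge's when $r\sim 1$.

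The main obstacle I anticipate is justifying the reproduction step uniformly in $r$: namely that $\rho(\mu-\sqrt{\Delta_{\tilde g}})(\chi\psi_{\lambda_j}) = \chi\psi_{\lambda_j} + (\text{error})$ with the error term controlled in $L^p(\Sigma)$ by $C\, r^{-1/2}\lambda_j^\delta\|\psi_{\lambda_j}\|_{L^2(B_r(x))}$. This requires (i) finite propagation speed to keep everything inside $B_{r_0}(x)$ where normal coordinates are valid, (ii) an estimate for how $\rho(\mu-\sqrt{\Delta_{\tilde g}})$ acts on functions that are only approximate (not exact) eigenfunctions, with the defect $(\Delta_{\tilde g}-\mu^2)(\chi\psi_{\lambda_j})$ expressed through $[\Delta,\chi]\psi_{\lambda_j}$, and (iii) applying the \emph{global} bound \eqref{NormOfSpectralCluster} to this error, which forces one to also bound $\|\chi'\psi_{\lambda_j}\|$-type terms by the full $L^2$ norm on $B_r(x)$ — this is exactly where the $r^{-1/2}$ weight (rather than $r^{-1}$) appears, coming from the fact that $\rho(\mu-\sqrt\Delta)$ gains back a factor $\mu^{-1}$ against one derivative. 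If the boundary $\partial X$ is present but smooth, one additionally uses that the Dirichlet/Neumann wave propagator still has finite speed and the above construction localizes away from $\partial X$ when $\Sigma\subset X\setminus\partial X$, or respects the boundary when $\Sigma\subset\partial X$; since the lemma assumes $\partial X$ smooth, no corner analysis is needed here.
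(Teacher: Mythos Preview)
Your rescaling approach has a genuine gap, and in fact the paper takes a different route precisely because rescaling does not prove the lemma as stated. The hypothesis \eqref{NormOfSpectralCluster} is a bound for the spectral cluster operator of $\Delta_g$ on the fixed manifold $(X,g)$; after you dilate to a unit ball with metric $\tilde g$, there is no reason the same bound holds for $\Pi_\mu^{\tilde g}$ with a constant independent of $r$ --- ``the geometry is uniformly controlled'' does not give you uniform spectral cluster estimates for free, and the lemma does not assume any such family statement. More concretely, if one grants the rescaled cluster bound and tracks Jacobians as you suggest, the power of $r$ that comes out is $r^{\,2\delta+k/p-n/2}$, which equals $r^{-1/2}$ only when $\delta=\tfrac{n-1}{4}-\tfrac{k}{2p}$; the lemma, however, asserts the factor $r^{-1/2}$ for \emph{any} $\delta$ satisfying \eqref{NormOfSpectralCluster}. (The paper even notes, in Remark~1.8, that a rescaling argument of the type you sketch was used in \cite{HeRi} and only reached $r\ge\lambda^{-1/8}$; the present lemma needs the full range $r\ge\lambda^{-1/2}$.) Your commutator heuristic for producing $r^{-1/2}$ is also not right: $[\Delta,\chi]$ has coefficients of size $r^{-1}$ and $r^{-2}$, and a single gain of $\mu^{-1}=(r\sqrt{\lambda})^{-1}$ does not collapse these to a clean $r^{-1/2}$.

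The paper's argument avoids rescaling and cutoffs on $\psi_\lambda$ entirely. One works directly on $(X,g)$ with the operator
\[
A_{\lambda,r}=\rho\!\big(r(\sqrt{\lambda}-\sqrt{\Delta_g})\big)+\rho\!\big(r(\sqrt{\lambda}+\sqrt{\Delta_g})\big)
=\frac{1}{\pi}\int_{-\infty}^{\infty} r^{-1}\hat\rho(r^{-1}t)\,e^{it\sqrt{\lambda}}\cos(t\sqrt{\Delta_g})\,dt,
\]
with $\rho\ge0$, $\rho(0)=1$, $\operatorname{supp}\hat\rho\subset[-\tfrac12,\tfrac12]$. Since $\psi_\lambda$ is an \emph{exact} eigenfunction, $A_{\lambda,r}\psi_\lambda=(1+\rho(2r\sqrt{\lambda}))\psi_\lambda$, so $|\psi_\lambda|\le|A_{\lambda,r}\psi_\lambda|$ pointwise --- no commutator error appears. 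Finite speed of propagation of $\cos(t\sqrt{\Delta_g})$ forces $A_{\lambda,r}(x,y)=0$ for $d_g(x,y)>r/2$, which gives the localization $\|A_{\lambda,r}f\|_{L^p(B_{r/2}(x)\cap\Sigma)}=\|A_{\lambda,r}(f|_{B_r(x)})\|_{L^p(B_{r/2}(x)\cap\Sigma)}$. It then suffices to prove the \emph{global} estimate $\|A_{\lambda,r}f\|_{L^p(\Sigma)}\le C_1 r^{-1/2}\lambda^\delta\|f\|_{L^2(X)}$. This is where the $r^{-1/2}$ really comes from: spectrally, $A_{\lambda,r}$ lives in a window of width $\sim r^{-1}$ about $\sqrt{\lambda}$, which contains $O(r^{-1})$ unit clusters $\Pi_k$; applying \eqref{NormOfSpectralCluster} to each and using Cauchy--Schwarz over those $O(r^{-1})$ terms produces exactly $r^{-1/2}$, uniformly in $\delta$. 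The condition $r\ge\lambda^{-1/2}$ enters only to control $|\sqrt{\lambda}+r^{-1}(2m+1)|\lesssim\sqrt{\lambda}(1+|m|)$ when summing the tails.
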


Our first result is the following conditional theorem which will  be obtained using the above lemma and a covering argument. 
\begin{theo} \label{LpEstimatesSS} Let $(X,g)$ and $\Sigma$ be as in Lemma \ref{LocalLp} and $2 \leq p \leq \infty$.  Suppose there exists $\delta=\delta(n, k, p)$ and $C$ independent of $\lambda$, such that for all $\lambda \geq 1$
\begin{equation}\label{NormOfSpectralCluster1}
||\Pi_\lambda ||_{L^2(X) \to L^p(\Sigma)} \leq C \lambda^{\delta}.
\end{equation} Let $\psi_{\lambda_j} \in E_{\lambda_j}$ be an $L^2$(X)-normalized eigenfunction with $\lambda_j \geq 1$. 
Then there exists $r^*$ dependent only on $(X, g)$ and $\Sigma$, such that if for some $r \in [\lambda_j^{-1/2}, r^*]$ and for all geodesic balls $\{B_{r}(x)\}_{x \in \Sigma}$ we have
\begin{equation} \label{L2assumption2}
|| \psi_{\la_j}||^2_{L^2(B_{r}(x))} \leq K r^n,
\end{equation} for some constant $K$ independent of $x$, 
then 
$$|| \psi_{\la_j}||_{L^p(\Sigma)}  \leq C_2 r^{\frac{n-1}{2}-\frac{k}{p}} \lambda_j^\delta.$$ Here $C_2$, depends only on $(X, g)$, $\Sigma$, p, $K$, and $C$. 
\end{theo}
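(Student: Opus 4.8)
The plan is to derive the global $L^p(\Sigma)$ bound from the local estimate in Lemma~\ref{LocalLp} by a covering argument, exploiting the uniform small-scale $L^2$ mass control \eqref{L2assumption2}. The only freedom we need is in the choice of $r^*$: take $r^* = r_0/2$ where $r_0 = \mathrm{inj}(X,g)$, so that every $r \in [\lambda_j^{-1/2}, r^*]$ is admissible for Lemma~\ref{LocalLp}. Fix such an $r$ and fix the eigenfunction $\psi_{\lambda_j}$.

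First I would cover $\Sigma$ by geodesic balls $B_{r/2}(x_i)$ with centers $x_i \in \Sigma$, chosen so that the concentric balls $B_{r/4}(x_i)$ are pairwise disjoint; by a standard Vitali-type argument this can be arranged with the doubled balls $B_{r}(x_i)$ having bounded overlap, i.e.\ each point of $X$ lies in at most $N_0 = N_0(X,g,\Sigma,n)$ of the balls $B_{r}(x_i)$. (The bound $N_0$ is uniform in $r$ because $(X,g)$ is compact and below the injectivity radius the geometry is comparable to Euclidean; the number of such balls is $O(r^{-k})$ but we will not need that explicitly.) Then apply \eqref{LocalLpEstimates} on each ball:
\begin{equation*}
\|\psi_{\lambda_j}\|_{L^p(B_{r/2}(x_i) \cap \Sigma)} \leq C_1\, r^{-1/2}\, \|\psi_{\lambda_j}\|_{L^2(B_{r}(x_i))}\, \lambda_j^{\delta}.
\end{equation*}

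Next I would sum these estimates in $\ell^p$. For $p < \infty$, since $\Sigma = \bigcup_i (B_{r/2}(x_i)\cap\Sigma)$,
\begin{equation*}
\|\psi_{\lambda_j}\|_{L^p(\Sigma)}^p \leq \sum_i \|\psi_{\lambda_j}\|_{L^p(B_{r/2}(x_i)\cap\Sigma)}^p \leq C_1^p\, r^{-p/2}\, \lambda_j^{p\delta} \sum_i \|\psi_{\lambda_j}\|_{L^2(B_{r}(x_i))}^p.
\end{equation*}
Now the key step: the small-scale assumption \eqref{L2assumption2} gives $\|\psi_{\lambda_j}\|_{L^2(B_{r}(x_i))}^2 \leq K r^n$, so $\|\psi_{\lambda_j}\|_{L^2(B_{r}(x_i))}^{p} = \big(\|\psi_{\lambda_j}\|_{L^2(B_{r}(x_i))}^2\big)^{p/2} \leq (Kr^n)^{(p-2)/2}\,\|\psi_{\lambda_j}\|_{L^2(B_{r}(x_i))}^2$. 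Summing and using the bounded overlap together with the global $L^2$-normalization $\|\psi_{\lambda_j}\|_{L^2(X)} = 1$,
\begin{equation*}
\sum_i \|\psi_{\lambda_j}\|_{L^2(B_{r}(x_i))}^p \leq (Kr^n)^{(p-2)/2} \sum_i \|\psi_{\lambda_j}\|_{L^2(B_{r}(x_i))}^2 \leq (Kr^n)^{(p-2)/2}\, N_0\, \|\psi_{\lambda_j}\|_{L^2(X)}^2 = N_0\, (Kr^n)^{(p-2)/2}.
\end{equation*}
Combining, $\|\psi_{\lambda_j}\|_{L^p(\Sigma)}^p \leq C_1^p N_0 K^{(p-2)/2}\, r^{-p/2}\, r^{n(p-2)/2}\,\lambda_j^{p\delta}$, and taking $p$-th roots yields the exponent $-\tfrac12 + \tfrac{n(p-2)}{2p} = \tfrac{n-1}{2} - \tfrac{n}{p}$ on $r$. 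Since $\Sigma$ has dimension $k \leq n$ and $r \leq r^* < 1$, we have $r^{-n/p} \leq r^{-k/p}$... wait — the claimed exponent is $\tfrac{n-1}{2}-\tfrac{k}{p}$, and since $r<1$ and $k \le n$, $r^{(n-1)/2 - n/p} \le r^{(n-1)/2 - k/p}$ gives the stated bound directly. The $L^\infty$ case ($p=\infty$) is simpler: take the max over $i$ of $\|\psi_{\lambda_j}\|_{L^\infty(B_{r/2}(x_i)\cap\Sigma)} \leq C_1 r^{-1/2}(Kr^n)^{1/2}\lambda_j^\delta = C_1\sqrt{K}\, r^{(n-1)/2}\lambda_j^\delta$, which matches with $k/p = 0$.

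The main obstacle — really the only nontrivial point — is justifying the bounded-overlap covering with a multiplicity constant $N_0$ independent of $r$ (for $r$ below the injectivity radius), and correctly bookkeeping how $\Sigma$'s dimension $k$ enters. The overlap bound is where compactness of $(X,g)$ and the lower injectivity radius bound are used; once that is in place the rest is Hölder's inequality on the sequence space $\ell^2 \hookrightarrow \ell^p$ combined with the hypothesis \eqref{L2assumption2} and the global normalization. One should also note that the constant $C_2 = C_1(C,\delta) \cdot N_0(X,g,\Sigma)^{1/p} \cdot K^{(p-2)/(2p)}$ has exactly the claimed dependence, and that no ergodicity or curvature assumption enters here — this is a purely deterministic consequence of small-scale $L^2$ equidistribution.
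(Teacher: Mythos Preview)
There is a genuine gap: your argument yields the exponent $\tfrac{n-1}{2}-\tfrac{n}{p}$ on $r$, not the claimed $\tfrac{n-1}{2}-\tfrac{k}{p}$, and the final monotonicity step goes the wrong way. Since $k<n$ and $r<1$, one has $\tfrac{n-1}{2}-\tfrac{n}{p}<\tfrac{n-1}{2}-\tfrac{k}{p}$ and therefore $r^{(n-1)/2-n/p}\geq r^{(n-1)/2-k/p}$; your bound is \emph{weaker} than what the theorem asserts and does not imply it. The loss occurs at the step $\sum_i\|\psi_{\lambda_j}\|_{L^2(B_r(x_i))}^2\leq N_0\|\psi_{\lambda_j}\|_{L^2(X)}^2=N_0$: replacing the mass over the tube $\bigcup_i B_r(x_i)\subset\mathcal T_r(\Sigma)$ by the global mass discards precisely the geometric information that the centers lie on the $k$-dimensional $\Sigma$.

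The fix is the cardinality bound you mention parenthetically but then dismiss: one \emph{does} need that the cover has at most $Br^{-k}$ balls. This is where the dimension $k$ enters, and it requires $r^*$ small enough (depending on $\Sigma$, not merely on $\mathrm{inj}(X,g)$) so that $\mathrm{Vol}_g(\mathcal T_r(\Sigma))\leq c_0 r^{n-k}$ and $\mathrm{Vol}_g(B_{r/4}(x_i))\geq a_1 r^n$; disjointness of the quarter-balls then forces $\#\{x_i\}\leq Br^{-k}$. With this in hand, either bound each summand directly by $(Kr^n)^{p/2}$ and multiply by $Br^{-k}$ (this is what the paper does), or keep your splitting but replace the crude bound by $\sum_i\|\psi_{\lambda_j}\|_{L^2(B_r(x_i))}^2\leq Br^{-k}\cdot Kr^n$ using \eqref{L2assumption2} termwise. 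Either route gives $\|\psi_{\lambda_j}\|_{L^p(\Sigma)}^p\leq C\,r^{-p/2-k+np/2}\lambda_j^{p\delta}$, hence the correct exponent $\tfrac{n-1}{2}-\tfrac{k}{p}$ after taking $p$-th roots. Note that this dependence of the cardinality bound on $\Sigma$ is also why the constant $C_2$ depends on $\Sigma$ and not only on $(X,g)$.
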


 As it is evident the above theorem is subject to two conditions. Under (\ref{NormOfSpectralCluster1}) one gets the \textit{trivial} $L^p(\Sigma)$ bounds $\la_j^ \delta$, but under (\ref{L2assumption2}) one can improve the trivial bounds $\la_j^\delta$ by the small \footnote{When $(k, p) =(n-1, 2)$ this small factor does not appear.} factor $r^{\frac{n-1}{2}-\frac{k}{p}}$. The condition (\ref{NormOfSpectralCluster1}) on $\Pi_\la$ is proved in \cite{BuGeTz, Hu, ChSo} for smooth manifolds without boundary. The condition (\ref{L2assumption2}) on $L^2$ norms on small balls is proved in \cite{Han, HeRi}  for negatively curved manifolds with $r= (\log \la_j)^{- \kappa}$, for any $\kappa \in (0, \frac{1}{2n})$.  Putting these results and the above theorem together we obtain the following logarithmic improvements:

\begin{theo}\label{LpQENC}

Let $(X, g)$ be a boundaryless compact connected smooth Riemannian manifold of dimension $n \geq 2$, with negative sectional curvature, and let $\Sigma$ be a compact submanifold of $X $ of dimension $k$. Let $\epsilon >0$ and let $\{ \psi_{\lambda_j}\}_{j \in \IN}$ be any ONB of $L^2(X)$ consisting of eigenfunctions of $\Delta_g$ with eigenvalues $\{\lambda_j\}_{j \in \IN}$. Then there exists $S \subset \IN$ of full density such that with the exception of $(k, p) =(n-1, 2)$ $$ 2 \leq p \leq \infty, j \in S: \quad || \psi_{\lambda_j}||_{L^p(\Sigma)}  = O_{\epsilon}\left ( (\log \lambda_j)^{-(\frac{1}{2n})(\frac{n-1}{2}-\frac{k}{p})+\epsilon}\lambda_j^{\delta(n, k, p)} \right ),$$  where 
\begin{equation} \label{delta}
\Small {\delta(n,k,p)=   \begin{cases}    \frac{n-1}{8}-\frac{n-2}{4p}  & k=n-1, \;  2 \leq p < \frac{2n}{n-1} \;  (\text{\cite{BuGeTz}}) \\
                                                \frac{n-1}{4}-\frac{n-1}{2p}  & k=n-1, \;   \frac{2n}{n-1}  \leq p \leq \infty \; (\text{\cite{BuGeTz}, and \cite{Hu} for} \; p=\frac{2n}{n-1} )\\
                                               \frac{n-1}{4}-\frac{k}{2p} & k < n-2, \;   2 \leq p \leq \infty, \; \text{or} \; k=n-2, \;2<p \leq \infty \; (\text{\cite{BuGeTz}}) \\
                                                \frac{1}{4} & k=n-2, n=3 , p=2 \; \; (\text{\cite{ChSo}}) \\
                                               \text{see Remark \ref{log}} & k=n-2, n>3, p=2. \\
\end{cases} }
\end{equation}
When $k=n-1$ and $\Sigma$ has a non-zero scalar second fundamental form, the exponent $\delta(n,n-1,p)$ can be improved to 
\begin{equation} \label{deltatilde} \Small \tilde{\delta}(n,p)= \frac{n-1}{6}- \frac{2n-3}{6p}, \qquad 2 < p < \frac{2n}{n-1}. \qquad  (\text{\cite{BuGeTz}}) \end{equation}
\end{theo}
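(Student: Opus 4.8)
The plan is to deduce Theorem \ref{LpQENC} by combining Theorem \ref{LpEstimatesSS} with the small-scale quantum ergodicity results of \cite{Han, HeRi} and the spectral cluster estimates of \cite{BuGeTz, Hu, ChSo}. First I would recall that on a compact boundaryless negatively curved manifold, the results of Han \cite{Han} and Hezari-Rivi\`ere \cite{HeRi} provide, for any orthonormal basis $\{\psi_{\lambda_j}\}$ of eigenfunctions and any $\kappa \in (0, \tfrac{1}{2n})$, a subsequence $S \subset \IN$ of full density along which
\begin{equation*}
\sup_{x \in X} \|\psi_{\lambda_j}\|_{L^2(B_{r(\lambda_j)}(x))}^2 \leq K\, r(\lambda_j)^n, \qquad j \in S,
\end{equation*}
where $r(\lambda_j) = (\log \lambda_j)^{-\kappa}$ and $K$ is independent of $j$ and $x$ (this is the small-scale equidistribution statement). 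In particular the hypothesis \eqref{L2assumption2} of Theorem \ref{LpEstimatesSS} holds with this choice of $r$, provided $j$ is large enough that $r(\lambda_j) \leq r^*$ and $r(\lambda_j) \geq \lambda_j^{-1/2}$, the latter being automatic for large $j$ since $(\log \lambda_j)^{-\kappa}$ decays only logarithmically. Removing finitely many (or a zero-density set of) indices does not affect full density, so we may assume these constraints throughout $S$.

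Next I would invoke the spectral cluster bounds: the condition \eqref{NormOfSpectralCluster1}, namely $\|\Pi_\lambda\|_{L^2(X) \to L^p(\Sigma)} \leq C\lambda^{\delta(n,k,p)}$, is exactly what is established for smooth boundaryless manifolds by Burq-G\'erard-Tzvetkov \cite{BuGeTz} (with the improvement of Hu \cite{Hu} at the endpoint $p = \tfrac{2n}{n-1}$ and of Chen-Sogge \cite{ChSo} in the case $k = n-2$, $n = 3$, $p = 2$), with $\delta(n,k,p)$ given by \eqref{delta}; the case $k = n-2$, $n > 3$, $p = 2$ is handled via Remark \ref{log}. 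The excluded pair $(k,p) = (n-1, 2)$ is precisely the case where the gain exponent $\tfrac{n-1}{2} - \tfrac{k}{p} = \tfrac{n-1}{2} - \tfrac{n-1}{2} = 0$ vanishes, so no improvement is claimed there and it is legitimately omitted. With both hypotheses of Theorem \ref{LpEstimatesSS} in hand, for each $j \in S$ we obtain
\begin{equation*}
\|\psi_{\lambda_j}\|_{L^p(\Sigma)} \leq C_2\, r(\lambda_j)^{\frac{n-1}{2} - \frac{k}{p}}\, \lambda_j^{\delta(n,k,p)},
\end{equation*}
and substituting $r(\lambda_j) = (\log \lambda_j)^{-\kappa}$ gives the bound $O\big((\log \lambda_j)^{-\kappa(\frac{n-1}{2} - \frac{k}{p})} \lambda_j^{\delta(n,k,p)}\big)$. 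Choosing $\kappa = \tfrac{1}{2n} - \tfrac{\epsilon}{\frac{n-1}{2} - \frac{k}{p}}$ (any $\kappa$ slightly below $\tfrac{1}{2n}$ works, with the loss $\epsilon$ absorbing the deficit) yields the exponent $-(\tfrac{1}{2n})(\tfrac{n-1}{2} - \tfrac{k}{p}) + \epsilon$ in the statement. Finally, for the improved exponent $\tilde\delta(n,p)$ in \eqref{deltatilde} when $k = n-1$ and $\Sigma$ has nonvanishing scalar second fundamental form, I would use instead the corresponding curved-hypersurface restriction bound from \cite{BuGeTz}, which gives \eqref{NormOfSpectralCluster1} with $\delta$ replaced by $\tilde\delta(n,p)$; the rest of the argument is identical.

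The main obstacle, and the only genuinely non-bookkeeping point, is verifying that the quoted restriction estimates \eqref{NormOfSpectralCluster1} are literally of the form required by Theorem \ref{LpEstimatesSS} — i.e.\ as bounds on the operator norm $\|\Pi_\lambda\|_{L^2(X) \to L^p(\Sigma)}$ of the unit-width spectral cluster projector, rather than as bounds on individual eigenfunctions or on $\chi(\sqrt{\Delta_g} - \lambda)$ for a Schwartz cutoff $\chi$. Since the literature sometimes states these as $\|\psi_\lambda\|_{L^p(\Sigma)} \lesssim \lambda^\delta \|\psi_\lambda\|_{L^2(X)}$ or in terms of a smooth frequency localization, I would note that a standard $TT^*$/almost-orthogonality argument converts a bound on $\chi(\sqrt{\Delta_g} - \lambda)$ with $\chi$ supported near the origin into the stated bound on $\Pi_\lambda$, so \eqref{NormOfSpectralCluster1} holds with the exponents listed in \eqref{delta} and \eqref{deltatilde}. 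Everything else — the passage to a full-density subsequence, the range constraints on $r$, and the choice of $\kappa$ — is routine once Theorem \ref{LpEstimatesSS} and the small-scale equidistribution input are granted.
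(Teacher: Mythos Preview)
Your proof is correct and follows exactly the same route as the paper: the author's proof consists of one sentence saying the result is immediate from Theorem~\ref{LpEstimatesSS} combined with the spectral cluster bounds of \cite{BuGeTz, Hu, ChSo} and the small-scale QE results \eqref{QENC} of \cite{HeRi}. Your additional paragraph on verifying that the cited restriction estimates are genuinely operator-norm bounds on $\Pi_\lambda$ is extra caution; the paper simply asserts this (see Remark~\ref{log}), so there is no discrepancy.
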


In the the case of manifolds with ergodic geodesic flows (ergodic billiard flows when the manifold has a boundary), the improvements are given only by $o(1)$.  We emphasize that the following theorem is only valid for interior submanifolds.

\begin{theo}\label{LpQE}
Let $(X, g)$ be a compact connected smooth Riemannian manifold of dimension $n \geq 2$, possibly with (piecewise smooth) boundary, and let $\Sigma$ be a compact submanifold of $X \backslash \partial X$ of dimension $k$. Suppose the geodesic flow on $(X, g)$ is ergodic when $\partial X = \emptyset $, or suppose the billiard flow on $(X,g)$ is ergodic when $\partial X \neq \emptyset$.  Let $\{ \psi_{\lambda_j} \}_{j \in \IN}$ be any ONB of $L^2(X)$ consisting of eigenfunctions of $\Delta_g$ (with Dirichlet or Neumann boundary conditions when $\partial X \neq \emptyset$) with eigenvalues $\{\lambda_j\}_{j \in \IN}$. Then there exists $S \subset \IN$ of full density such that, with the exception of $(k, p) =( n-1, 2)$, we have  $$2 \leq p \leq \infty, j \in S: \quad || \psi_{\lambda_j}||_{L^p(\Sigma)}  = o\left (\lambda_j^{\delta(n, k, p)}\right ),$$ where $\delta$ is defined by (\ref{delta}). 
When $k=n-1$ and $\Sigma$ has a non-zero scalar second fundamental form, the exponent $\delta(n,n-1,p)$ can be improved to $\tilde \delta$ denoted in (\ref{deltatilde}). 
\end{theo}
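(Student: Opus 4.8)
The strategy is to combine the conditional Theorem 1.3 with the quantum ergodicity results in the form of small-scale equidistribution along a full density subsequence. First I would invoke the classical quantum ergodicity theorem of Shnirelman--Zelditch--Colin de Verdière (and its extension to ergodic billiards with piecewise smooth boundary, via Zelditch--Zworski and Gérard--Leichtnam) to obtain a full density subset $S_0 \subset \IN$ such that for every fixed continuous observable $a$, one has $\langle \Op(a) \psi_{\lambda_j}, \psi_{\lambda_j}\rangle \to \bar a$ along $j \in S_0$, where $\bar a$ is the Liouville average. The point is that this convergence for a fixed test function only gives the hypothesis (1.5) of Theorem 1.3 with a \emph{fixed} radius $r$, not a shrinking one; so unlike Theorem 1.4 we cannot extract a power of $\log\lambda_j$. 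Instead, for each fixed small $r>0$ we cover $\Sigma$ by finitely many balls $B_r(x)$, pick smooth cutoffs $\chi_x$ supported in $B_r(x)$ and bounded by $1$, and use QE applied to these finitely many observables to conclude that along a full density subsequence $|| \psi_{\lambda_j}||^2_{L^2(B_r(x))} \le K r^n$ holds for all these balls with $K$ as close to the normalized Liouville mass constant as we like (in particular $K$ can be taken uniform in $r$, comparable to $1/\vol(X)$, up to a slowly-growing factor absorbed into the $o(1)$).

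Second, I would feed this into Theorem 1.3: for each fixed $r \le r^*$ we obtain a full density $S_r \subset \IN$ with $|| \psi_{\lambda_j}||_{L^p(\Sigma)} \le C_2 r^{\frac{n-1}{2}-\frac{k}{p}} \lambda_j^{\delta(n,k,p)}$ for all $j \in S_r$, where $C_2$ depends on $r$ only through $K$, hence is essentially uniform. Since (with the exception $(k,p)=(n-1,2)$) the exponent $\frac{n-1}{2}-\frac{k}{p}$ is strictly positive, the prefactor $r^{\frac{n-1}{2}-\frac{k}{p}}$ tends to $0$ as $r \to 0$. The remaining task is the standard diagonal argument: choose a sequence $r_m \downarrow 0$, so that for each $m$ we have a full density set $S_{r_m}$; then construct a single set $S$ of full density (taking $S$ to agree with $S_{r_m}$ on longer and longer initial segments of $\IN$, i.e. $S \cap [1, N_m] = S_{r_m} \cap [1, N_m]$ for a rapidly increasing sequence $N_m$) such that for $j \in S$ with $\lambda_j$ large one has $|| \psi_{\lambda_j}||_{L^p(\Sigma)} \le C_2 r_{m(j)}^{\frac{n-1}{2}-\frac{k}{p}} \lambda_j^{\delta}$ with $m(j) \to \infty$, giving the claimed $o(\lambda_j^{\delta})$.

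Third, for the improved exponent $\tilde\delta(n,p)$ when $k=n-1$ and $\Sigma$ has nowhere-vanishing scalar second fundamental form, the argument is identical: one only needs the corresponding sharp spectral cluster bound (1.4) with $\delta$ replaced by $\tilde\delta$ from \cite{BuGeTz}, plugs it into Lemma 1.2 / Theorem 1.3, and runs the same covering-plus-diagonal scheme. Finally, I must make sure the interior hypothesis $\Sigma \subset X \setminus \partial X$ is used correctly: since $\Sigma$ sits in the interior, the cutoffs $\chi_x$ can be taken supported away from $\partial X$, and the QE statement for ergodic billiards (which holds for interior pseudodifferential observables with compactly supported symbols) applies directly without any boundary subtlety.

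\textbf{Main obstacle.} The genuine difficulty is the passage from QE for a \emph{single} fixed test function to a statement that holds \emph{simultaneously} for a family of balls while keeping the exceptional set of density zero, and in particular making the diagonal extraction yield a uniform full density $S$ that works for \emph{all} scales at once; this is where one must be careful that the constant $K$ in (1.5) — equivalently the rate at which the QE defect tends to zero — does not degrade as $r\to 0$ in a way that overwhelms the gain $r^{\frac{n-1}{2}-\frac{k}{p}}$. Everything else (the covering of $\Sigma$, the choice of cutoffs, the reduction to Theorem 1.3) is routine.
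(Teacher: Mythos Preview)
Your argument in the boundaryless case is correct and matches the paper's: quantum ergodicity supplies the uniform bound $\|\psi_{\lambda_j}\|_{L^2(B_r(x))}^2 \le K r^n$ for each fixed $r$ (with $K$ independent of $r$, as in the paper's Lemma~\ref{QElemma}), Theorem~\ref{LpEstimatesSS} then gives the bound with the factor $r^{\frac{n-1}{2}-\frac{k}{p}}$, and a diagonal extraction over $r_m \downarrow 0$ yields the $o(\lambda_j^\delta)$.

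There is, however, a genuine gap in the case $\partial X \neq \emptyset$. You invoke Theorem~\ref{LpEstimatesSS}, whose hypothesis is the spectral cluster bound $\|\Pi_\lambda\|_{L^2(X)\to L^p(\Sigma)} \le C\lambda^{\delta(n,k,p)}$ on $X$ itself. But this bound with the sharp exponents $\delta(n,k,p)$ of \cite{BuGeTz, Hu, ChSo} is \emph{not} known on manifolds with boundary; indeed, as the paper notes (see the discussion around Corollary~\ref{BoundaryCor} and the reference to Blair~\cite{Bla}), the spectral cluster operators on manifolds with boundary suffer a loss due to whispering gallery modes. So you cannot plug into Theorem~\ref{LpEstimatesSS} directly. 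Your remark that ``the QE statement for ergodic billiards \dots\ applies directly without any boundary subtlety'' addresses only the $L^2$-equidistribution input, not the analytic input~(\ref{NormOfSpectralCluster1}).

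The paper closes this gap in Section~\ref{Boundary} by exploiting the hypothesis $\Sigma \subset X\setminus \partial X$ in a different way: setting $s = d(\Sigma,\partial X)/4$, it uses finite speed of propagation of $\cos(t\sqrt{\Delta_g})$ to show that for $|t|<s/2$ and points at distance $>2s$ from $\partial X$ the wave kernel on $X$ coincides with that on a boundaryless extension $(\tilde X,\tilde g)$. This transfers the local $L^p$ restriction estimate of Lemma~\ref{LocalLp} from $\tilde X$ (where the spectral cluster bounds of \cite{BuGeTz, Hu, ChSo} are available) to eigenfunctions of $\Delta_g$ on $X$, at scales $r<s/4$. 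Only after this step can one feed in the QE theorem of Zelditch--Zworski~\cite{ZZ} and Lemma~\ref{QElemma} and run the extraction you describe. Without this comparison argument your proof does not go through in the boundary case.
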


\subsection{Improved sup norms and the number of nodal domains} When $\partial X \neq \emptyset$,  we get the following $o(1)$ improvements of the results of Grieser \cite{Gr02}, Sogge \cite{So02}, and \cite{Xu} for $L^\infty$ norms of QE eigenfunctions and their gradient, away from a shrinking neighborhood $\mathcal T_{\ep_j} ( \mathcal S)= \{ x \in X; \; d(x, \mathcal S) \leq \ep_j \}$ of the singular part of the boundary.  In particular if $\Sigma \subset \partial X \backslash \mathcal S$ is a smooth compact submanifold of the regular part of the boundary, we get sup norms of the form $ o (\lambda_j^{\frac{n-1}{4}})$ for the boundary traces of QE eigenfunctions on $\Sigma$. 

\begin{theo}\label{LinftyQE}
Let $(X, g)$ be a compact connected smooth Riemannian manifold of dimension $n \geq 2$ with piecewise smooth boundary and let $\mathcal S$ be the singular part of $\partial X$. Suppose the billiard flow on $(X,g)$ is ergodic.  Let $\{ \psi_{\lambda_j} \}_{j \in \IN}$ be any ONB consisting of eigenfunctions of $\Delta_g$ (with Dirichlet or Neumann boundary conditions) with eigenvalues $\{\lambda_j\}_{j \in \IN}$. Then there exist $S \subset \IN$ of full density and $\{ \ep_j \}_{j \in S}$ with $\ep_ j \to 0^+$, such that for $j \in S$
$$ \sup_{X \backslash \mathcal T_{\ep_j}(\mathcal S)} | \psi_{\la_j} |  = o\left (\lambda_j^{\frac{n-1}{4}}\right ) \quad \text{and} \quad  \sup_{X \backslash \mathcal T_{\ep_j}(\mathcal S)}  \la_j^{- \frac12} |\nabla \psi_{\la_j} |  = o\left (\lambda_j^{\frac{n-1}{4}}\right ).$$
Hence, in particular 
$$\text{in the Neumann case:} \quad  \sup_{\partial X \backslash \mathcal T_{\ep_j}(\mathcal S)} | \psi_{\la_j} |  = o\left (\lambda_j^{\frac{n-1}{4}}\right ), $$
$$ \qquad \text{in the Dirichlet case:} \quad   \sup_{\partial X \backslash \mathcal T_{\ep_j}(\mathcal S)}  \la_j^{- \frac12}|\partial_n \psi_{\la_j} |  = o\left (\lambda_j^{\frac{n-1}{4}}\right ).$$
\end{theo}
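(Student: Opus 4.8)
The plan is to combine a \emph{uniform local sup-norm and gradient bound valid up to the regular part of the boundary} with \emph{small-scale quantum ergodicity for ergodic billiards}. For the first ingredient, fix a point $x \in X$ and a scale $r$ with $\lambda_j^{-1/2} \le r \le c_0\, d(x,\mathcal S)$, where $c_0 \in (0,1)$ depends only on $(X,g)$. Let $\rho$ be an even Schwartz function with $\rho(0) = 1$ and $\widehat\rho$ supported in $[-1,1]$, and set $\chi = \rho\big(r(\sqrt{\Delta_g} - \sqrt{\lambda_j})\big)$, defined by the functional calculus of the chosen (Dirichlet or Neumann) realization of $\Delta_g$, so that $\chi\psi_{\lambda_j} = \psi_{\lambda_j}$. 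By finite propagation speed for the wave group, the Schwartz kernel $\mathcal K$ of $\chi$ vanishes when $d_X(x,y) > r$, and since $r \le c_0\, d(x,\mathcal S)$ this support set lies at a definite distance from $\mathcal S$ and (by the piecewise smoothness hypothesis, for $c_0$ small enough) meets at most one smooth face of $\partial X$; hence the interior Hörmander parametrix, respectively the Melrose--Taylor boundary parametrix underlying the estimates of Grieser \cite{Gr02}, Sogge \cite{So02} and Xu \cite{Xu}, applies there with constants \emph{independent of $x$}. Decomposing $\psi_{\lambda_j} = \chi\big(\mathbf{1}_{B_r(x)}\psi_{\lambda_j}\big) + \chi\big(\mathbf{1}_{X \setminus B_r(x)}\psi_{\lambda_j}\big)$, noting that the second term vanishes near $x$, and using the local Weyl bounds $\int_X |\mathcal K(x,y)|^2\, d_gv(y) \lesssim r^{-1}\lambda_j^{\frac{n-1}{2}}$ and $\int_X |\nabla_x \mathcal K(x,y)|^2\, d_gv(y) \lesssim r^{-1}\lambda_j^{\frac{n+1}{2}}$ (obtained by decomposing the spectrum into unit frequency windows $\{\sqrt{\lambda_k} \in [\nu,\nu+1]\}$ and invoking $\sum_{\sqrt{\lambda_k} \in [\nu,\nu+1]}\big(|\psi_{\lambda_k}(x)|^2 + \nu^{-2}|\nabla\psi_{\lambda_k}(x)|^2\big) \lesssim \nu^{n-1}$, valid uniformly for $x$ away from $\mathcal S$), Cauchy--Schwarz gives, with a constant $C_0$ independent of $x$, $r$ and $j$,
\begin{equation}\label{plan-loc}
|\psi_{\lambda_j}(x)| + \lambda_j^{-1/2}\,|\nabla\psi_{\lambda_j}(x)| \;\le\; C_0\, r^{-1/2}\,\lambda_j^{\frac{n-1}{4}}\,\|\psi_{\lambda_j}\|_{L^2(B_r(x)\cap X)}.
\end{equation}
This is the $k = n$, $p = \infty$, $\delta = \tfrac{n-1}{4}$ instance of Lemma \ref{LocalLp}, extended so as to remain valid up to the regular boundary.

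For the second ingredient, ergodicity of the billiard flow together with the quantum ergodicity theorem for manifolds with ergodic billiard flow produces a set $S \subset \IN$ of full density along which $\int_X a\,|\psi_{\lambda_j}|^2\,d_gv \to \operatorname{Vol}(X)^{-1}\int_X a\,d_gv$ for every fixed $a \in C^\infty(X)$. Applying this to smoothed indicators of the balls of a countable dense family and running the covering argument of \cite{Han, HeRi}, one upgrades it to: there exist $r_j \downarrow 0$ with $r_j \ge \lambda_j^{-1/2}$ and a constant $K = K(X,g)$ such that
\begin{equation}\label{plan-ssqe}
\sup_{x \in X}\ r_j^{-n}\,\|\psi_{\lambda_j}\|_{L^2(B_{r_j}(x)\cap X)}^2 \;\le\; K \qquad (j \in S).
\end{equation}
For Dirichlet data the contribution of the boundary balls to \eqref{plan-ssqe} is automatic from $\psi_{\lambda_j}|_{\partial X} = 0$ and \eqref{plan-loc}; for Neumann data this is exactly where quantum ergodicity is used, to rule out mass concentrating at the regular boundary.

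Combining the two, put $\ep_j := c_0^{-1} r_j$, so $\ep_j \to 0^+$. For $j \in S$ and any $x \in X \setminus \mathcal T_{\ep_j}(\mathcal S)$ one has $d(x,\mathcal S) \ge \ep_j$, hence $r_j \le c_0\, d(x,\mathcal S)$, so applying \eqref{plan-loc} with $r = r_j$ and then \eqref{plan-ssqe} yields, uniformly in $x$,
\begin{equation*}
|\psi_{\lambda_j}(x)| + \lambda_j^{-1/2}\,|\nabla\psi_{\lambda_j}(x)| \;\le\; C_0 K^{1/2}\, r_j^{\frac{n-1}{2}}\,\lambda_j^{\frac{n-1}{4}} \;=\; o\big(\lambda_j^{\frac{n-1}{4}}\big),
\end{equation*}
since $n \ge 2$ and $r_j \to 0$. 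Taking the supremum over $X \setminus \mathcal T_{\ep_j}(\mathcal S)$ gives the first two displayed bounds. For the last two, note $\partial X \setminus \mathcal T_{\ep_j}(\mathcal S) \subset X \setminus \mathcal T_{\ep_j}(\mathcal S)$: in the Neumann case the boundary trace of $\psi_{\lambda_j}$ there is controlled by the first bound, and in the Dirichlet case $\lambda_j^{-1/2}|\partial_n \psi_{\lambda_j}| \le \lambda_j^{-1/2}|\nabla\psi_{\lambda_j}|$ there is controlled by the second.

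The main obstacle is establishing \eqref{plan-loc} with constants that stay bounded as $x$ approaches $\mathcal S$: one must verify that the Hörmander and Grieser--Sogge--Xu parametrix constructions, and the attendant local Weyl remainders, are uniform in $x$ once the working scale obeys $r \le c_0\, d(x,\mathcal S)$, so that finite propagation speed confines all of the analysis to a region that sees only a single smooth boundary face (or none) and never the singular set $\mathcal S$. By contrast, the quantum ergodicity input is, modulo the covering argument of \cite{Han, HeRi}, essentially standard; the only extra care needed there is to check that \eqref{plan-ssqe} persists up to the regular boundary for Neumann eigenfunctions, where glancing billiard rays render the microlocal picture delicate.
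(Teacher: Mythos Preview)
Your overall strategy coincides with the paper's: localize via finite propagation speed, transfer to a setting where the Grieser--Sogge--Xu cluster bounds are available, and then feed in the uniform small-ball $L^2$ control coming from quantum ergodicity (the paper's Lemma~\ref{QElemma} is exactly your covering upgrade of \cite{ZZ}). Two technical points need repair.

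First, the operator $\chi=\rho\bigl(r(\sqrt{\Delta_g}-\sqrt{\lambda_j})\bigr)$ does \emph{not} have compactly supported Schwartz kernel: unwinding the Fourier transform produces an integral against the half-wave propagator $e^{it\sqrt{\Delta_g}}$, which fails strict finite speed (already on $\mathbb R$ or $S^1$ its kernel is smooth but nonzero outside the light cone). The paper uses instead the even combination
\[
A_{\lambda,r}=\rho\bigl(r(\sqrt\lambda-\sqrt{\Delta_g})\bigr)+\rho\bigl(r(\sqrt\lambda+\sqrt{\Delta_g})\bigr)
=\frac1\pi\int r^{-1}\widehat\rho(r^{-1}t)\,e^{it\sqrt\lambda}\cos\bigl(t\sqrt{\Delta_g}\bigr)\,dt,
\]
built from $\cos(t\sqrt{\Delta_g})$ alone, and then uses $|\psi_\lambda|\le|A_{\lambda,r}\psi_\lambda|$ (from $\rho\ge 0$) in place of your exact reproduction. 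This is an easy fix.

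Second, and more substantively: you assert a \emph{uniform} constant $C_0$ in \eqref{plan-loc}, independent of $d(x,\mathcal S)$, flag it as ``the main obstacle,'' but do not resolve it. The paper does not establish such uniformity and in fact sidesteps the issue. For each small $s>0$ it builds a smooth-boundary manifold $X_s\supset X$ agreeing with $X$ outside $\mathcal T_s(\mathcal S)$, proves by an energy argument that $\cos(t\sqrt{\Delta_0})(x,y)=\cos(t\sqrt{\Delta_s})(x,y)$ on the relevant support, and then applies Theorem~\ref{LinftyPi} on $X_s$. This yields your inequality \eqref{plan-loc} only with a constant $C_s$ that is allowed to blow up as $s\to 0$ (the smoothing of the corner makes $\partial X_s$ highly curved). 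The paper then chooses a function $r(s)\to 0$ with $C_s\,r(s)^{(n-1)/2}\to 0$, and defines $s_j$ by $\Lambda_{r(s_j)}=\lambda_j$, so that the blowing-up constant is absorbed by the shrinking scale; finally $\varepsilon_j=3s_j$. Your shortcut $\varepsilon_j=c_0^{-1}r_j$ would work only if $C_0$ were genuinely uniform; as written that step is unproved, and the paper's $X_s$-family together with the $r(s)$ balancing is the clean way to close the argument.
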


As a corollary of the above sup norm estimates, and using the method \footnote{See \cite{JaJu} for a different technique applied to even and odd QE eigenfunctions of surfaces with an isometric involution.} of Jung-Zelditch \cite{ZeJJ}, we get the following generalization of their results on the number of nodal domains for any ergodic billiard table with piecewise smooth boundary, including the Bunimowich stadium \cite{Bun}, Sinai dispersive billiards \cite{Si}, and their families. 

\begin{theo}\label{NN} Let $(X, g)$ be a compact connected smooth Riemannian manifold  with piecewise smooth boundary of dimension $n=2$. Suppose the billiard flow on $(X,g)$ is ergodic.  Let $\{ \psi_{\lambda_j} \}_{j \in \IN}$ be any ONB consisting of eigenfunctions of $\Delta_g$ (with Dirichlet or Neumann boundary conditions) with eigenvalues $\{\lambda_j\}_{j \in \IN}$. Then there exists $S \subset \IN$ of full density such that the number of nodal domains of $\psi_{\la_j}$ tends to infinity as $ \la_j \to \infty$  along $S$. 
\end{theo}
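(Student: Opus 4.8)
The strategy follows the method of \cite{GRS, ZeJJ1, ZeJJ}: we relate the number of nodal domains of $\psi_{\lambda_j}$ to the number of sign changes of its Cauchy datum along a fixed boundary arc, and we feed in the improved sup-norm estimate of Theorem \ref{LinftyQE} in place of the small-scale quantum ergodicity input that is available only on negatively curved manifolds. Fix once and for all a closed arc $\gamma\subset\partial X\setminus\mathcal S$ with $\overline\gamma\cap\mathcal S=\emptyset$ (chosen inside the support of the boundary limit measure of Step 2). Let $u_j$ be the boundary Cauchy datum of $\psi_{\lambda_j}$, i.e. $u_j=\psi_{\lambda_j}|_{\partial X}$ in the Neumann case and $u_j=\lambda_j^{-1/2}\,\partial_n\psi_{\lambda_j}|_{\partial X}$ in the Dirichlet case. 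It suffices to prove that, along a full-density subsequence $S\subset\IN$, the number $N_j$ of sign changes of $u_j$ along $\gamma$ satisfies $N_j\to\infty$; the passage from this to the nodal-domain count is the topological step below.

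First, by Theorem \ref{LinftyQE} there are a full-density $S_1\subset\IN$ and $\ep_j\to0^+$ with $\sup_{X\setminus\mathcal T_{\ep_j}(\mathcal S)}|\psi_{\lambda_j}|=o(\lambda_j^{1/4})$ and $\sup_{X\setminus\mathcal T_{\ep_j}(\mathcal S)}\lambda_j^{-1/2}|\nabla\psi_{\lambda_j}|=o(\lambda_j^{1/4})$ for $j\in S_1$; restricting to $\partial X$ and using $\mathrm{dist}(\overline\gamma,\mathcal S)>0$ we get $\gamma\subset\partial X\setminus\mathcal T_{\ep_j}(\mathcal S)$ for $j$ large, hence $\|u_j\|_{L^\infty(\gamma)}=o(\lambda_j^{1/4})$ for $j\in S_1$. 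Second, by quantum ergodicity of boundary values of eigenfunctions for ergodic billiards (G\'erard--Leichtnam, Hassell--Zelditch, Burq) there is a full-density $S_2\subset\IN$ along which the microlocal lifts of $u_j$ converge to a measure $d\mu_\partial$ on $B^*\partial X$ which, on $\partial X\setminus\mathcal S$, is a positive multiple of the projected Liouville measure. For $\gamma$ as chosen this yields (i) $\|u_j\|_{L^2(\gamma)}^2\to c_\gamma>0$, (ii) for each $\ep>0$ the part of $u_j|_\gamma$ with boundary frequency $\le\ep\lambda_j^{1/2}$ has $L^2(\gamma)$-mass $\le\eta(\ep)+o(1)$ with $\eta(\ep)\to0$ as $\ep\to0$, while $u_j|_\gamma$ has essential frequency support in $\{|\xi'|\lesssim\lambda_j^{1/2}\}$ (the evanescent/whispering-gallery part carries $o(1)$ mass because $\psi_{\lambda_j}$ is quantum ergodic in the interior).

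Set $S=S_1\cap S_2$, which is full density. For $j\in S$, splitting off the low-frequency part and using (i)--(ii), the datum $u_j|_\gamma$ agrees, up to an $L^2(\gamma)$-error $\eta(\ep)+o(1)$, with a function whose frequencies lie in $[\ep\lambda_j^{1/2},C\lambda_j^{1/2}]$, whose $L^2(\gamma)$-norm is $\gtrsim 1$, and whose $L^\infty(\gamma)$-norm is $o(\lambda_j^{1/4})$. By the oscillation argument of \cite{GRS, ZeJJ1, ZeJJ} — Bernstein's inequality bounds the derivative by $\lesssim\lambda_j^{1/2}\|u_j\|_{L^\infty(\gamma)}$, which together with the lower bound on the genuinely oscillatory part forces many sign changes — such a function changes sign on $\gamma$ at least $\gtrsim \ep\,\lambda_j^{1/2}/o(\lambda_j^{1/2})$ times; letting $j\to\infty$ and then $\ep\to0$ gives $N_j\to\infty$ along $S$. (Here the improvement $o(\lambda_j^{1/4})$ over Sogge's $O(\lambda_j^{1/4})$ is essential: the trivial bound would give only $N_j\gtrsim\ep$.) Finally we invoke the topological lemma of \cite{GRS, ZeJJ1, ZeJJ}, adapted to billiards: on a surface, if the Cauchy datum of $\psi_{\lambda_j}$ has $N_j$ sign changes along $\gamma\subset\partial X\setminus\mathcal S$, then $\psi_{\lambda_j}$ has at least $c\,N_j-C$ nodal domains, with $c,C$ depending only on $(X,g)$ — one runs an Euler-characteristic/graph count on the nodal set of each fixed $\psi_{\lambda_j}$ (a locally finite graph by Cheng/Bers), noting that consecutive sign-constant arcs of the datum on $\gamma$ abut nodal domains of opposite sign, that the topology of the surface-with-boundary caps the number of arcs of $\gamma$ a single domain can meet, and that the corner neighbourhood $\mathcal T_{\ep_j}(\mathcal S)$, being disjoint from $\gamma$, is irrelevant. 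Combined with $N_j\to\infty$ this proves the theorem.

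The crux is the sign-change count. Crude interpolation is hopeless here, because the $L^4(\gamma)$ restriction norm of $u_j$ is only controlled by $O(\lambda_j^{1/4})\to\infty$, so Hölder-type arguments yield at best a bounded number of sign changes on a fixed arc; one genuinely has to exploit that the boundary trace oscillates at frequency $\asymp\lambda_j^{1/2}$ (equivalently, that $\mu_\partial$ is not concentrated on the zero section of $B^*\gamma$) and to control the low-frequency, near-normal-incidence component of the trace — which is exactly the component not killed by the sub-Sogge sup bound. A secondary difficulty is making the topological lemma of \cite{GRS, ZeJJ1, ZeJJ} precise for billiard tables with corners and for an arc $\gamma$ rather than the whole boundary, and in verifying the mild non-degeneracy of the nodal set needed to run the graph count.
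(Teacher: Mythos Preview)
Your proof has a genuine gap: you have correctly isolated two of the three ingredients used in \cite{ZeJJ} and in the present paper --- the boundary quantum ergodicity of \cite{GeLe,HaZe,Bu} for the lower bound (a), and Theorem \ref{LinftyQE} for the sup bound (c) --- but you have omitted the third, namely the Kuznecov-type completeness/sum formula of \cite{HHHZ}, which is what supplies condition (b) of Proposition \ref{JJZ}: for fixed smooth $f$ supported in the arc,
\[
\Big|\int_\Sigma f(s)\,\psi^b_{\la_j}(s)\,ds\Big|^2 \le c_2\,\la_j^{-1/2}\,\|f\|_{L^2}^2.
\]
The point of the sign-change argument in \cite{GRS,ZeJJ1,ZeJJ} is \emph{not} a Bernstein/derivative bound. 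One assumes $u_j$ has at most $N$ sign changes, chooses a test function $f$ of bounded complexity with the same sign pattern so that $fu_j\ge 0$, and then combines (a), (c), and (b):
\[
c_1\int f \;\le\; \int f\,|u_j|^2 \;\le\; \|u_j\|_\infty \int f\,|u_j| \;=\; \|u_j\|_\infty\,\Big|\int f\,u_j\Big| \;\le\; o(\la_j^{1/4})\cdot O(\la_j^{-1/4})\,\|f\|_2 \;=\; o(1)\,\|f\|_2,
\]
which contradicts the lower bound once $N$ is bounded. The quantitative rate $\la_j^{-1/4}$ in (b) is exactly what cancels the $o(\la_j^{1/4})$ sup bound; without it the chain above yields only $c_1\int f \le o(\la_j^{1/4})$, which is vacuous.

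Your proposed substitute for (b) --- the frequency localisation coming from boundary QE, i.e.\ that the portion of $u_j|_\gamma$ at semiclassical frequency $|\xi'|\le\eps$ has $L^2$ mass $\le\eta(\eps)+o(1)$ --- is a \emph{quadratic} statement about $u_j$ and only yields $|\langle f,u_j\rangle|=o_\eps(1)+O(\eta(\eps)^{1/2})\|f\|_2$ for fixed $f$, with no $\la$-decay. Plugging this into the chain above gives no contradiction. Nor does a direct oscillation count work: frequency support in $[\eps\la_j^{1/2},C\la_j^{1/2}]$ does not, by itself, force $\asymp\la_j^{1/2}$ real zeros on an interval (that heuristic is valid for a single Fourier mode, not for superpositions), and Bernstein's inequality gives only an upper bound $\|u_j'\|_\infty\lesssim \la_j^{1/2}\|u_j\|_\infty$, which constrains zeros from above, not below. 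Finally, even granting (b), the paper has to do additional work you do not mention: it extends the Kuznecov formula of \cite{HHHZ} from smooth to piecewise smooth boundary via the finite-speed-of-propagation comparison \eqref{Claim}, and it extracts (b) only along a density $1-\tau$ subsequence, letting $\tau\to0$ at the end. These steps should be restored.
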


We recall that in \cite{ZeJJ}, the above result was proved for non-positively curved manifolds with smooth concave boundary. It is known (see \cite{Si}, \cite{ChSi}, \cite{BuChSi}) that such billiard tables are ergodic. The main ingredients of the proof of \cite{ZeJJ} are quantum ergodicity theorems of \cite{GeLe, Bu, HaZe} for the boundary values of eigenfunctions, the so called ``Kuznecov sum formula" for manifolds with smooth boundary \cite{HHHZ}, and sup norm estimates of size $o(\la^{1/4})$ for the boundary values of eigenfunctions on positively curved manifolds with smooth concave boundary \cite{SoZe14}. For us to prove the above theorem, the first two ingredients are still available except that our boundary can have singular points. We will discuss in the proof that as long as we stay away from $\mathcal S$, the corners will not cause any problems. The last ingredient, which is the new ingredient, is the sup norms in Theorem \ref{LinftyQE} that this paper provides.

\subsection{ $L^p$ restrictions for manifolds with boundary}
In this section we show that as a corollary of the local $L^p$ restrictions estimates (Lemma \ref{LocalLp}),  one can extend the results of \cite{BuGeTz, Hu, ChSo} to manifolds with boundary. 

\begin{cor} \label{BoundaryCor} Let $(X, g)$ be a compact connected smooth Riemannian manifold of dimension $n$, with (piecewise smooth) boundary, and let $\Sigma$ be a compact smooth submanifold of $X \backslash \partial X$ of dimension $k$. Suppose $\psi_\la$ is an $L^2(X)$-normalized eigenfunction of $\Delta_g$ with Dirichlet or Neumann boundary conditions. Then for all $ 2 \leq p \leq \infty$
$$ || \psi_\la ||_{L^p( \Sigma)} \leq C_s \la^{\delta(n, k, p)},$$ where $C_s$ depends on $s= d( \Sigma, \partial X)$ and $\delta(n, k, p)$ is defined by (\ref{delta}). When $k=n-1$ and $\Sigma$ has a non-zero scalar second fundamental form, the exponent $\delta(n,n-1,p)$ can be improved to $\tilde \delta(n, p)$ denoted in (\ref{deltatilde}).
\end{cor}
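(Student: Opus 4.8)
The plan is to deduce the global restriction bound from the local estimate of Lemma \ref{LocalLp} by a covering argument, exactly as in the proof of Theorem \ref{LpEstimatesSS}, but now exploiting only the trivial $L^2$ mass bound $\|\psi_\la\|_{L^2(B_r(x))}^2 \leq 1$ rather than any equidistribution. The one genuinely new point to address is that $X$ has a boundary, so the hypothesis \eqref{NormOfSpectralCluster} of Lemma \ref{LocalLp}, which was verified in \cite{BuGeTz, Hu, ChSo} for \emph{boundaryless} manifolds, is not directly available; I will recover it locally near $\Sigma$ via a finite propagation speed / localization argument.

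\textbf{Step 1: Localize away from the boundary.} Set $s = d(\Sigma, \partial X) > 0$ and fix a cutoff $\chi \in C_c^\infty(X \setminus \partial X)$ with $\chi \equiv 1$ on the $s/3$-neighborhood of $\Sigma$ and $\supp \chi$ contained in the $2s/3$-neighborhood of $\Sigma$. The restriction $\psi_\la|_\Sigma$ depends only on $\chi \psi_\la$. Choosing a chart, one can extend the metric on a neighborhood of $\supp\chi$ to a smooth metric $\tilde g$ on a closed manifold $\tilde X$ without boundary (or on $\IR^n$ with a compactly supported perturbation of the Euclidean metric), agreeing with $g$ on a neighborhood of $\supp\chi$. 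On $\tilde X$ the spectral cluster estimate \eqref{NormOfSpectralCluster1} for restriction to $\Sigma$ holds by \cite{BuGeTz, Hu, ChSo}, with the same $\delta(n,k,p)$ (and $\tilde\delta$ when $\Sigma$ is curved).

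\textbf{Step 2: Transfer the bound to the local eigenfunction.} Because $\Delta_g$ and $\Delta_{\tilde g}$ agree near $\supp\chi$ and finite propagation speed confines the relevant wave group for time $O(1)$ to a small neighborhood of $\supp\chi$, one has, for a suitable spectral multiplier $\rho$ with $\hat\rho$ supported in a small interval and $\rho \equiv 1$ near $1$,
\begin{equation}\label{transfer}
\chi\,\psi_\la = \chi\,\rho(\la^{-1/2}\sqrt{\Delta_g})(\chi_1\psi_\la) = \chi\,\rho(\la^{-1/2}\sqrt{\Delta_{\tilde g}})(\chi_1\psi_\la) + O_{L^2 \to C^\infty}(\la^{-\infty}),
\end{equation}
where $\chi_1 \in C_c^\infty(X\setminus\partial X)$ equals $1$ on $\supp\chi$. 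The operator $\chi\,\rho(\la^{-1/2}\sqrt{\Delta_{\tilde g}})$ restricted to $\Sigma$ has the same $L^2(\tilde X)\to L^p(\Sigma)$ mapping norm $O(\la^\delta)$ as $\Pi_\la$ on $\tilde X$ (this is how \eqref{NormOfSpectralCluster1} is proved — via $\rho(\la^{-1/2}\sqrt{\Delta})$ — so it is available verbatim), and $\|\chi_1\psi_\la\|_{L^2(X)} \leq 1$. Hence $\|\psi_\la\|_{L^p(\Sigma)} = \|\chi\psi_\la\|_{L^p(\Sigma)} \lesssim_s \la^{\delta(n,k,p)}$, with the constant depending on $\chi, \chi_1$, hence on $s$, and the negligible remainder in \eqref{transfer} contributes $O(\la^{-\infty})$.

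\textbf{Alternative route via Lemma \ref{LocalLp}, and the main obstacle.} If one prefers to invoke Lemma \ref{LocalLp} as a black box rather than redo the $\rho(\la^{-1/2}\sqrt\Delta)$ argument: the lemma itself is stated for $X$ with smooth (or empty) boundary, so it does not literally apply to our piecewise-smooth-boundary $X$. One then applies the lemma on the auxiliary closed manifold $\tilde X$ to the function $\chi_1\psi_\la$ — which is not an eigenfunction of $\Delta_{\tilde g}$, but is an approximate one microlocally near $\Sigma$ by finite propagation speed — to get \eqref{LocalLpEstimates} on each ball $B_{r_0/2}(x)$, $x \in \Sigma$, with $r = r_0/2$ (no small factor, just the trivial bound), then sum over a finite cover of the compact $\Sigma$ by such balls and use $\sum_i \|\psi_\la\|_{L^2(B_{r_0/2}(x_i))}^2 \lesssim \|\psi_\la\|_{L^2(X)}^2 = 1$ with the $\ell^p \le \ell^2$ inequality for $p \ge 2$. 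The only delicate point — and the main obstacle — is making rigorous that $\chi_1\psi_\la$ behaves like an eigenfunction for the purposes of the local estimate despite $\chi_1\psi_\la$ satisfying $(\Delta_{\tilde g}-\la)(\chi_1\psi_\la) = [\Delta_g,\chi_1]\psi_\la$, whose right-hand side is supported in the annulus $\{s/3 \le d(\cdot,\partial X)\le 2s/3\}$ away from $\Sigma$; one controls its effect on $B_{r_0/2}(x)\cap\Sigma$ by an elliptic/Duhamel estimate that costs only powers of $\la$ absorbed into the constant $C_s$. Everything else is the routine covering bookkeeping already present in the proof of Theorem \ref{LpEstimatesSS}. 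Finally, since no curvature or ergodicity hypothesis entered, the stated $\delta(n,k,p)$ and the curved-hypersurface improvement $\tilde\delta(n,p)$ are inherited directly from \cite{BuGeTz, Hu, ChSo}.
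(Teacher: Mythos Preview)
Your proposal is correct and follows the same strategy as the paper: transfer the problem to a boundaryless extension $\tilde X$ via finite propagation speed of the wave group, then invoke the spectral cluster bounds of \cite{BuGeTz, Hu, ChSo} on $\tilde X$. The paper's execution differs only in that it works throughout with the even multiplier $A_{\lambda,r}$ built from $\cos(t\sqrt{\Delta})$, for which finite propagation speed yields an \emph{exact} kernel identity near $\Sigma$ (as in \eqref{Claim}, with $\partial X$ playing the role of $\mathcal S$); thus one never cuts off $\psi_\lambda$, the local estimate of Lemma \ref{LocalLp} holds verbatim for the genuine eigenfunction, and the commutator obstacle you flag in your alternative route simply does not arise --- a covering of $\Sigma$ by balls of fixed radius $r=s/5$ then finishes. (One trivial quibble: ``$\rho\equiv 1$ near $1$'' is incompatible with $\hat\rho$ having compact support by Paley--Wiener; $\rho(1)=1$ is all you need.)
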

We emphasize that the above corollary holds for the eigenfunctions and not necessarily for the spectral cluster operators $\Pi_\la$. See the results of Blair\cite{Bla} where estimates on $||\Pi_\la ||_{L^2(X) \to L^p(\Sigma)}$ are given with a natural loss due to whispering gallery modes. We also underline that in the proof of this corollary, in addition to Lemma \ref{LocalLp}, we heavily use the results of \cite{BuGeTz, Hu, ChSo} on the spectral cluster operators $\Pi_\la$ on compact manifolds without boundary.

\subsection{\textbf{Remarks}}
\begin{rema}\label{Chen}
We point out that when $(X,g)$ is negatively curved, for $p= \infty$ the estimate in Theorem \ref{LpQENC} is worse than Berard's upper bound $ (\log \lambda_j)^{-\frac{1}{2}}{\lambda_j^{\frac{n-1}{4}}}$, which is valid on all non-positively curved compact manifolds \cite{Be}. In addition, for $k=n-1$ and the range $ p > \frac{2n}{n-1}$, and also $k \leq n-2$ and $ p > 2$, our estimates are weaker than the upper bounds $(\log \lambda_j)^{-\frac{1}{2}}{\lambda_j^{\delta(n, k, p)}}$ of \cite{Ch}.
\end{rema}
\begin{rema}\label{QER} Our improved estimates do not include the case $(k, p)=(n-1, 2)$. However, by the quantum restriction theorems of \cite{TZ} and \cite{DZ},  under a certain lack of microlocal symmetry assumption on $\Sigma$, the restrictions $\psi_\lambda |_\Sigma$ are QE on $\Sigma$. Hence in particular $ || \psi_\lambda ||^2_{L^2 (\Sigma)}$ are uniformly bounded. Of course one can interpolate these uniform upper bounds for $p=2$ with our estimates for $p=\frac{2n}{n-1}$ to get better estimates, however the full density subsequences that arise in the proofs of  \cite{TZ} and \cite{DZ} depend on $\Sigma$, while the full desity subsequences that are chosen in our Theorems \ref{LpQE} and \ref{LpQENC} are independent of the choice of $\Sigma$.
\end{rema}

\begin{rema}\label{log} The exponents $\delta(k, n, p)$  in (\ref{delta}), and  $\tilde{\delta}(n,p)$  in (\ref{deltatilde}) when $\Sigma$ is \textit{curved}, hold more generally for the spectral cluster operators (\ref{NormOfSpectralCluster}) and are sharp by \cite{BuGeTz} for the given range of parameters in (\ref{delta}). The only case not covered in (\ref{delta}) is the case $k=n-2$, $p=2$. It was proved in \cite{BuGeTz} that in this case for $\la \geq 2$
$$||\Pi_\lambda ||_{L^2(X) \to L^2(\Sigma)} \leq C \sqrt{ \log \la} \, \lambda^{\frac{1}{4}}.$$ It is believed that this estimate is not sharp and one should be able to remove the $\log$ term as was proved in \cite{ChSo} for $n=3$. 
\end{rema}

\begin{rema}
One might be able to generalize the $o(1)$ improvements in Theorem \ref{LpQE} for semiclassical pseudodifferential operators whose principal symbols generate ergodic Hamiltonian systems. However, we point out that the local $L^p$ estimates we obtained used the finite speed propagation property of the even part of the wave group, something that we do not necessarily have in a more general framework.  It is possible that a rescaling argument applied to the results of \cite{Ta12} and \cite{HaTa} would give local $L^p$ estimates for some range $r > h^\alpha$, which can still  be useful and for example would give $o(1)$ improvements for QE eigenfunctions of semiclassical operators. In fact, such a rescaling argument with the help of  semiclassical $L^p$ estimates of \cite{KTZ} was used in \cite{HeRi} to obtain local $L^p$ estimates, however the proof of \cite{HeRi} (although not explicitly stated) only worked for $r \geq \lambda^{-1/8}$. In \cite{So15}, Sogge gave an elegant proof that works for $r \geq \lambda^{-1/2}$, which is the method we have adapted in the present paper.  
\end{rema}

\begin{rema} Using the small-scale QE results of \cite{LeRu}, and our Theorem \ref{LpEstimatesSS}, we can get polynomial improvements for toral eigenfunctions in dimensions $n \geq 3$. 

\end{rema}
\begin{rema} One can also use the result of Blair \cite{Bla} to prove $o(1)$ improvements on his $L^p$ estimates for the boundary values of Neumann eigenfunctions, analogous to the sup norm estimates of Theorem \ref{LinftyQE}. We underline that in this situation there will be a loss for $p \neq \infty$ due to whispering gallery modes near the boundary.
\end{rema}

\begin{rema} Using our method and also the spectral cluster estimates of Smith-Sogge \cite{SoSm} on manifolds with smooth boundary, one can give $o(1)$ improvements on $L^p$ norms of QE eigenfunctions on  manifolds with piecewise smooth boundary, away from a shrinking neighborhood of the corners. 

\end{rema}

\begin{rema} As a final remark, we must mention the recent works of Blair-Sogge \cite{BlSo}, Xi-Zhang \cite{XZ}, and Marshall \cite{Ma}. In \cite{BlSo}, logarithmic improvements are given on the $L^2$ norms of restrictions to geodesics on non-positively curved surfaces.  In \cite{XZ},  inspired by the works of Blair-Sogge \cite{BlSo} and Sogge \cite{So16b} $\log \log$ improvements are obtained for $L^4$ geodesic restrictions on non-positively curved surfaces and $\log$ improvements in the case of compact hyperbolic surfaces.  In \cite{Ma}, $L^2$ geodesic restriction estimates are improved by a power of $\la$ for Hecke-Maass cusp forms. 

\end{rema}

\subsection{\label{SSQE} \textbf{Background on small scale quantum ergodicity}} First, we recall that the quantum ergodicity result of Shnirelman-Colin de Verdi\`ere-Zelditch \cite{Sh, CdV, Ze87} implies in particular that if the geodesic flow of a smooth compact Riemannian manifold without boundary is ergodic then for any ONB $\{ \psi_{\lambda_j} \}_{j=1}^\infty$ consisting of the eigenfunctions of $\Delta_g$, there exists a full density subset $S \subset \IN$ such that for any fixed $r < \text{inj}(X, g)$, independent of $\lambda_j$, one has \begin{equation}\label{QE} || \psi_{\lambda_j}||^2_{L^2(B_{r}(x))} \sim \frac{\text{Vol}_g(B_r(x))}{\text{Vol}_g(X)}, \qquad \text{as}\quad  \lambda_j \to \infty, \quad j \in S.  \end{equation}
The analogous result on manifolds with piecewise smooth boundary and with ergodic billiard flows was proved by \cite{ZZ}.  

The small scale equidistribution  problem asks whether (\ref{QE}) holds for $r$ dependent on $\lambda_j$. A quantitative QE result of Luo-Sarnak \cite{LuSa} shows that the Hecke eigenfunctions on the modular surface satisfy this property along a density one subsequence for $r=\lambda^{- \kappa}$ for some small $\kappa>0$.  Also, under the generalized Riemann hypothesis, Young \cite{Yo} has proved that small scale equidistribution holds for Hecke eigenfunctions for $r = \lambda^{-1/4 + \epsilon}$.  In \cite{Han} and \cite{HeRi}, this problem was studied for the eigenfunctions of compact negatively curved manifolds. To be precise, it was proved that on compact negatively curved manifolds without boundary, for any $\epsilon >0$ and any ONB $\{ \psi_{\lambda_j} \}_{j=1}^\infty$  consisting of the eigenfunctions of $\Delta_g$, there exists a subset $S_{\epsilon} \subset \IN$ of full density such that for all $x \in X$ and $j \in S_{\epsilon}$:
\begin{equation} \label{QENC} \quad K_1 r^n \leq  || \psi_{\lambda_j}||^2_{L^2(B_{r}(x))} \leq K_2 r^n, \qquad \text{with} \;\; r=(\log \lambda_j)^{-\frac{1}{2n} +\epsilon}, \end{equation} for some positive constants $K_1, K_2$ which depend only on $(X, g)$ and $\epsilon$. \footnote{The same result was proved in \cite{Han} for $r=(\log \lambda_j)^{-\frac{1}{3n} +\epsilon}$.} 

We also point out that although eigenfunctions on the flat torus $\R^n / \Z^n$  are not quantum ergodic, however they equidistribute on the configuration space $\R^n / \Z^n$ (see \cite{MaRu}, and also \cite{Ri} and \cite{Taylor} for later proofs). So one can investigate the small scale equidistribution property for toral eigenfunctions. It was proved in \cite{HeRiTorus} that a commensurability of $L^2$ masses such as (\ref{QENC}) is valid for a full density subsequence with $r = \lambda^{-1/(7n+4)}$. Lester-Rudnick \cite{LeRu} improved this rate of shrinking to  $r= \lambda^{- \frac{1}{2n-2} +\epsilon}$, and in fact they proved that the stronger statement (\ref{QE}) holds.  They also showed that their results are almost sharp \footnote{ That it fails for $r= \lambda^{- \frac{1}{2n-2} \red{-} \epsilon}$ for a positive density subsequence of some ONB.}. The case of interest is $n=2$, which gives $r= \lambda^{-1/2 +\epsilon}$. A natural conjecture is that this should be the optimal rate \footnote{We must mention a result of Ingremeau \cite{In}, where small scale QE is proved for any  $r \gg \la^{-1/2}$ for distorted plane waves on non-compact non-positively curved manifolds with Euclidean ends.} of shrinking on negatively curved manifolds. A recent result of \cite{Han16} proves that random eigenbases on the torus enjoy small scale QE for $r= \lambda^{-\frac{n-2}{4n} +\epsilon}$, which is better than \cite{LeRu} for $n \geq 5$. 

\subsection{Definition of manifolds with piecewise smooth boundary} \label{PS2}

We follow the definition of \cite{HaZe}, but we allow our Riemannian manifolds to be non-Euclidean.

\begin{def1}\label{PS} Let $X$ be the closure of an open connected subset of a smooth compact connected boundaryless manifold $\tilde X$ of dimension $n \geq 2$. We say that $X \subset \tilde X$ is a piecewise smooth manifold if the boundary $\partial X$ is Lipschitz, and can be written as a finite disjoint union
$$
\partial X = H_1 \cup \dots \cup H_m \cup \mathcal S,
$$
where each $H_i$ is an open subset of a smooth embedded hypersurface $S_i$, with $\text{int}(X)$ lying locally on one side of $H_i$, and where $\mathcal S$ is a closed subset that lies on a finite union of compact submanifolds of $\tilde X$ of dimensions $n-2$ or less.  The sets $H_i$ are called boundary hypersurfaces of $X$. We call $\mathcal S$ the singular set (or sometimes corners), and write $\partial X \setminus \mathcal S$ for the regular part of the boundary.
\end{def1}
Throughout with paper we assume that $g$ is a metric on $X$ that can be extended to a smooth metric $\tilde g$ on $ \tilde X$.  A geodesic ball $B_r(x)$ in $X$ (centered at $x \in X$) is defined to be $\tilde B_r(x) \cap \tilde X$ where $\tilde B_r(x)$ is the geodesic ball in $(\tilde X, \tilde g)$ of radius $r$ centered at $x$.   We also define $\text{inj}(X, g) = \inf_{x \in X} \text{inj}(x)$, where $\text{inj}(x)$ is the largest $R$ such that $\tilde B_R(x)$ is embedded in $\tilde X$. Note that this definition of injectivity radius is extrinsic and is smaller than or equal the intrinsic definition one can consider.

\section{Proof of local $L^p$ restriction estimates}
As we discussed, the main ingredient is Lemma \ref{LocalLp} whose proof is similar to Sogge's local $L^p$ estimates \cite{So15} and follows by imitation. We give the proof since later in Section \ref{PSQE} we need to make some modifications of this  proof for manifolds with corners. 
\begin{proof} [\textbf{Proof of Lemma \ref{LocalLp}}] First we choose a nonnegative function $\rho \in C^\infty (\R)$ satisfying
\begin{equation} \rho(0)=1, \quad \text{and } \, \, \supp \hat \rho(t) \subset [-\frac12, \frac12].
\end{equation}
Then we define the operator
\begin{align} A_{\la,r} & = \frac1\pi \int_{-\infty}^\infty r^{-1} \hat \rho(r^{-1}t) \, e^{it \sqrt{\la}}
\cos(t \sqrt{\Delta_g}) \, dt  \label{A} \\ &=\rho(r(\sqrt{\la}-\sqrt{\Delta_g}))+\rho(r(\sqrt{\la}+\sqrt{\Delta_g})) \nonumber .\end{align}
By the properties of $\rho$, we have
$$A_{\la,r} \psi_\la =\big (1+\rho(2r \sqrt{\la})\big ) \, \psi_\la.$$
Hence since $\rho$ is nonnegative we have
$|\psi_\la| \leq |A_{\la,r} \psi_\la|$, which implies that
$$\| \psi_\la \|_{L^{p}(B_{r/2}(x) \cap \Sigma)}\le \| A_{\la,r} \psi_\la \|_{L^{p}(B_{r/2}(x) \cap \Sigma)}.
$$
To prove the lemma, it is enough to show that for all $f \in C^\infty (X)$
$$ \|A_{\la,r}f\|_{L^p(B_{r/2}(x) \cap \Sigma)} \le C_1 r^{-\frac{1}{2}}\la^{\delta}\|f\|_{L^2(B_r(x))}. $$ 
To show this, we first observe that by the finite speed of propagation property of  $\cos \big (t \sqrt{\Delta_g} \big )$, the integral kernel $\cos \big (t \sqrt{\Delta_g}\big )(x,y)$
vanishes if $d_g(x, y) > t$.
Therefore, from the fact that $\supp \hat \rho (t) \subset [-\frac12, \frac12]$, the integral kernel
$A_{\la,r}(x,y)$ of $A_{\la,r}$ satisfies
$$A_{\la,r}(x,y)=0, \quad \text{if } \, \, d_g(x,y)> \frac{r}{2}.	$$
 This in particular shows that $$ \|A_{\la,r}f\|_{L^p(B_{r/2}(x) \cap \Sigma)} = \|A_{\la,r}(f|_{B_r(x)})\|_{L^p(B_{r/2}(x) \cap \Sigma)} \leq  \|A_{\la,r}(f|_{B_r(x)})\|_{L^p( \Sigma)}$$ As a result, our local $L^p$ restriction estimate is reduced to proving the global restriction estimate \footnote{One can probably use a quantitative version of \cite{Ta12} and \cite{HaTa} to give an alternate proof of this, since $A_{\lambda, r}$ is a quasimode.}
$$\|A_{\la,r}f\|_{L^{p}(\Sigma)}\le C_1r^{-\frac12}\la^{\delta}\|f\|_{L^2(X)},$$
for all $r \in [ \lambda^{-1/2}, \frac12 \,\text{inj}(X, g)]$ and $C_1$ that is uniform in $r$ and $\lambda$.

To prove this reduced estimate, we first recall that 
$\Pi_k=\bigoplus_{ \sqrt{\lambda_j} \in\,[\sqrt{k}, \sqrt{k}+1]} \Pi_{E_{\lambda_j}}$, where $\Pi_{E_{\lambda_j}}$ is the orthogonal projection operator onto the eigenspace $E_{\lambda_j}=\, \text{ker}\, (\Delta_g - \lambda_j)$, and that 
$$A_{\la,r}f=\sum_{j=0}^\infty \bigl[\rho(r( \sqrt{\la}-\sqrt{\la_j}))+\rho(r(\sqrt{\la}+\sqrt{\la_j}))\bigr] \, \Pi_{\lambda_j}f.
$$
Because $\rho\in {\mathcal S}(\R)$, we have  for every $N \in \IN$ and $\la\ge1$
$$|\rho(r(\sqrt{\la}-\sqrt{\la_j}))|+|\rho(r(\sqrt{\la}+\sqrt{\la_j}))|\le c_N(1+r|\sqrt{\la}-\sqrt{\la_j}|)^{-N}. $$
Therefore,
\begin{equation} \label{PiEstimate}
	\|\Pi_k A_{\la,r}f\|_{L^2(X)}\le c_N(1+r|\sqrt{\la}-\sqrt{k}|)^{-N}\|\Pi_k f\|_{L^2(X)}, \quad  N \in \IN. \end{equation}
Now, for each $m \in \Z$ let $I_m =\Bigl[\sqrt{\la} +\frac{2m-1}{r}, \sqrt{\la}+ \frac{2m + 1}{r}\Bigr).$
Since the number of intervals $[\sqrt{k}-1,\sqrt{k})$ that intersect $I_m$ as $\sqrt{k}$ varies in $\N$ is bounded by $\frac{2}{r}$, 
we can use the Cauchy-Schwarz inequality to obtain
$$
\Bigl\|\, \sum_{\{\sqrt{k} \in \N: \, [\sqrt{k}-1,\sqrt{k})\cap I_m \ne \emptyset\}}\Pi_k g
\Bigr\|_{L^{p}(\Sigma)}
\le 2 r^{-\frac12}\Bigl( \, \sum_{\{ \sqrt{k} \in \IN: \, [ \sqrt{k}-1, \sqrt{k})\cap I_m \ne \emptyset\}} \|\Pi_k g\|_{L^{p}(\Sigma)}^2
\, \Bigr)^{\frac {1}{2}}.
$$
By using this inequality for $g= A_{\lambda, r}f$, using our assumption (\ref{NormOfSpectralCluster}) that 
$$||\Pi_k ||_{L^2(X) \to L^p(\Sigma)} \leq C k^{\delta},$$ and using (\ref{PiEstimate}), we see that 
\begin{align*}
    \Bigl\|\, \sum_{\{ \sqrt{k} \in \IN: \, [\sqrt{k}-1, \sqrt{k})\cap I_m \ne \emptyset\}} & \Pi_k A_{\la,r}f  \Bigr\|_{L^{p}(\Sigma)}
	\\
	&\le 2C r^{-\frac12}\Bigl( \, \sum_{\{ \sqrt{k} \in \IN: \, [\sqrt{k}-1, \sqrt{k})\cap I_m \ne \emptyset\}} k^{2\delta} \|\Pi_k
	A_{\la,r} f \|_{L^2(X)}^2\, \Bigr)^{\frac12} \notag 
	\\
	&\le 2C c_N r^{-\frac12} \Big (1+|\sqrt{\la} +\frac{2m+1}{r}| \Big )^{2\delta}(1+|m|)^{-N}\| f \|_{L^2(X)}.
\end{align*}
Finally, by the above inequality 
\begin{align*}
   \|A_{\la,r}f\|_{L^{p}(\Sigma)} &\le \sum_{m \in \Z}
	\bigl\|A_{\la,r}(\sum_{\la_j\in I_m}\Pi_{E_{\lambda_j}}f)\bigr\|_{L^{p}( \Sigma )}
	\\
	&=\sum_{m\in Z}	\bigl\|\sum_{ \sqrt{k} \in \IN :[\sqrt{k}-1, \sqrt{k})\cap I_m \ne \emptyset} \Pi_k A_{\la,r}(\sum_{\la_j\in I_m} \Pi_{E_{\lambda_j}} f)\bigr\|_{L^{p}(\Sigma)}
	\\
	&\le 2C c_Nr^{-\frac12}\sum_{m \in \Z}(1+|m|)^{-N}\, \Big (1+| \sqrt{\la} + \frac{2m +1}{r}|\bigr)^{2\delta }\|f\|_{L^2(X)}
	\\
	&\le C_1 r^{-\frac12}\la^{\delta}\|f\|_{L^2(X)},
\end{align*}
where we have used
$$| \sqrt{\la} +r^{-1}(2m+1)|  \leq 2 \sqrt{\la} (1+|m|), $$ which is implied from the assumption $r \geq \la^{-1/2}$. 
\end{proof}

\section{Proofs of global $L^p$ restrictions estimates}  
In this section we prove Theorem \ref{LpEstimatesSS} (and Theorems \ref{LpQENC} and \ref{LpQE} as its corollaries), using Lemma \ref{LocalLp}. In the course of the proof we also show the following corollary which gives global $L^p$ estimates for restrictions of eigenfunctions to $\Sigma$ in terms of local $L^2$ norms on small balls in $(X,g)$ centered on $\Sigma$, and $L^2$ norm on a small tube around $\Sigma$.  

\begin{cor} \label{GlobalLp} Under the assumptions of Lemma \ref{LocalLp}, there exists $r^*$ dependent on $(X, g)$ and $\Sigma$, such that uniformly for all $r \in [\lambda_j^{-\frac12}, r^*]$:
\begin{equation} \label{GlobalLpEstimates} 
||\psi_{\lambda_j} ||_{L^p(\Sigma)} \leq C_3 r^{-\frac{1}{2}} \left ( \sup_{x \in \Sigma} || \psi_{\lambda_j}||_{L^2(B_{r}(x))} \right)^{\frac{p-2}{p}} \left ( || \psi_{\lambda_j}||_{L^2(\mathcal T_{r}(\Sigma))} \right )^{\frac{2}{p}} \lambda_j^\delta, 
\end{equation}
where $\mathcal T_{r} (\Sigma)= \{ p \in X; d_g(p, \Sigma) < r \}$ is the tube of radius $r$ around $\Sigma$. 
\end{cor}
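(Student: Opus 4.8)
The plan is to deduce Corollary \ref{GlobalLp} from Lemma \ref{LocalLp} by a standard bounded-overlap covering argument on $\Sigma$. Fix $\lambda_j \geq 1$ and $r \in [\lambda_j^{-1/2}, r^*]$, where $r^* = r^*(X,g,\Sigma)$ is chosen small enough that $r^* \leq \tfrac12\,\text{inj}(X,g)$, so that Lemma \ref{LocalLp} is applicable at radius $r$ at every point of $\Sigma$. First I would pick a maximal $\tfrac r2$-separated set $\{x_i\} \subset \Sigma$, which is finite by compactness of $\Sigma$. By maximality the balls $\{B_{r/2}(x_i)\}$ cover $\Sigma$. Since the centers are $\tfrac r2$-separated points of the fixed compact manifold $(\tilde X, \tilde g)$, a volume-comparison argument using the bounded geometry of $(\tilde X, \tilde g)$ shows the enlarged balls $\{B_r(x_i)\}$ have overlap bounded by a constant $N_0 = N_0(X,g,n)$, uniformly for $r \leq r^*$ (the disjoint balls $\tilde B_{r/4}(x_i)$ sit inside a single ball $\tilde B_{2r}(y)$ whenever $y \in B_r(x_i)$). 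The one point worth isolating is the trivial inclusion $B_r(x_i) \subset \mathcal T_r(\Sigma)$, valid because $x_i \in \Sigma$ forces $d_g(y,\Sigma) \leq d_g(y,x_i) < r$ for $y \in B_r(x_i)$.

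For $p = \infty$ the estimate is immediate from Lemma \ref{LocalLp} after taking the supremum over $x \in \Sigma$, the tube factor appearing with exponent $0$. For $2 \leq p < \infty$ the steps are: (i) since the sets $B_{r/2}(x_i)\cap\Sigma$ cover $\Sigma$, one has $\int_\Sigma |\psi_{\lambda_j}|^p \, d_g\sigma \leq \sum_i \int_{B_{r/2}(x_i)\cap\Sigma} |\psi_{\lambda_j}|^p \, d_g\sigma$; (ii) applying Lemma \ref{LocalLp} at each $x_i$ bounds the right-hand side by $(C_1 r^{-1/2}\lambda_j^\delta)^p \sum_i ||\psi_{\lambda_j}||_{L^2(B_r(x_i))}^p$; (iii) split $||\psi_{\lambda_j}||_{L^2(B_r(x_i))}^p = ||\psi_{\lambda_j}||_{L^2(B_r(x_i))}^{p-2}\,||\psi_{\lambda_j}||_{L^2(B_r(x_i))}^2$ and estimate the first factor by $\big(\sup_{x\in\Sigma}||\psi_{\lambda_j}||_{L^2(B_r(x))}\big)^{p-2}$; (iv) use the bounded overlap of $\{B_r(x_i)\}$ together with $B_r(x_i)\subset\mathcal T_r(\Sigma)$ to get $\sum_i ||\psi_{\lambda_j}||_{L^2(B_r(x_i))}^2 \leq N_0\,||\psi_{\lambda_j}||_{L^2(\mathcal T_r(\Sigma))}^2$. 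Assembling these,
$$||\psi_{\lambda_j}||_{L^p(\Sigma)}^p \leq N_0\big(C_1 r^{-1/2}\lambda_j^\delta\big)^p \Big(\sup_{x\in\Sigma}||\psi_{\lambda_j}||_{L^2(B_r(x))}\Big)^{p-2} ||\psi_{\lambda_j}||_{L^2(\mathcal T_r(\Sigma))}^2,$$
and taking $p$-th roots gives (\ref{GlobalLpEstimates}) with $C_3 = N_0^{1/p}C_1 \leq \max(1,N_0)C_1$, a constant depending only on $(X,g)$, $\Sigma$, $p$, and the constant $C$ in (\ref{NormOfSpectralCluster}); uniformity of $C_1$ in $r$ and $\lambda$ is supplied by Lemma \ref{LocalLp}.

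I do not expect a genuine obstacle here: the only mild care needed is the uniform-in-$r$ bounded-overlap estimate, which is a routine consequence of the bounded geometry of the compact manifold $(\tilde X,\tilde g)$ and does not involve $\Sigma$, and the bookkeeping confirming that every constant produced is independent of $r$ and $\lambda_j$. For completeness, to see that this corollary yields Theorem \ref{LpEstimatesSS} one additionally covers the tube by $O(r^{-k})$ balls of radius $O(r)$ centered on $\Sigma$; under hypothesis (\ref{L2assumption2}) this gives $||\psi_{\lambda_j}||_{L^2(\mathcal T_r(\Sigma))} = O(K^{1/2}r^{(n-k)/2})$, while $\sup_{x\in\Sigma}||\psi_{\lambda_j}||_{L^2(B_r(x))} \leq K^{1/2}r^{n/2}$, and substituting both into (\ref{GlobalLpEstimates}) produces exactly the exponent $\tfrac{n-1}{2}-\tfrac{k}{p}$.
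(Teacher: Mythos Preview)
Your proposal is correct and follows essentially the same route as the paper's proof: cover $\Sigma$ by balls $B_{r/2}(x_i)$ with $x_i\in\Sigma$ and bounded overlap of the doubled balls $B_r(x_i)$, apply Lemma \ref{LocalLp} on each piece, split $\|\psi_{\lambda_j}\|_{L^2(B_r(x_i))}^p$ into a $(p-2)$-power controlled by the sup and a square controlled via bounded overlap and the inclusion $B_r(x_i)\subset\mathcal T_r(\Sigma)$. Your explicit construction of the cover via a maximal $r/2$-separated set and the volume-comparison justification of bounded overlap are exactly what the paper invokes abstractly; the additional volume conditions the paper imposes on $r^*$ (tube volume $\lesssim r^{n-k}$, ball volume $\gtrsim r^n$) are used only for Theorem \ref{LpEstimatesSS}, not for this corollary, which is consistent with your remark in the last paragraph.
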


\begin{proof}[\textbf{Proofs of Theorem \ref{LpEstimatesSS} and Corollary \ref{GlobalLp}}] We approach as \cite{HeRi} and \cite{So15}. We fix $r < r_0/2=\text{inj}(X, g)/2$ and we choose a set of points $\mathcal J =\{ x_i \} \subset \Sigma$ such that 
$$\Sigma \subset \bigcup_{x_i \in \mathcal J} B_{r/2}(x_i),$$
in such a way that any point $p$ in $X$ belongs to at most $c$ many (or none) of the double balls $B_{r}(x_i)$, where $c$ only depends on $(X, g)$. We then select $r^* < \frac{r_0}{2}$ small enough so that for all $r \leq r^*$
$$ \text{Vol}_g (\mathcal T_r (\Sigma)) \leq c_0 r^{n-k}, $$ for some $c_0$ independent of $r$. This is possible because $\Sigma$ is a compact embedded smooth submanifold of $X \backslash \partial X$ or of $\partial X \backslash \mathcal S$ of dimension $k$. We also realize that by making $r^*$ even smaller we can make sure that  for all $x \in \Sigma$  $$\text{Vol}_g (B_{r/2}(x)) \geq a_1 r^{n}$$ for some uniform $a_1$. This is obvious if $\Sigma$ is an interior submanifold. However, if $\Sigma \subset \partial X \backslash \mathcal S$, for a given $x \in \Sigma$ we first change our coordinates near $x$ to the upper half plane, then we realize that by choosing $r^*$ sufficiently small, for any $r < r^*$ we can fit a half Euclidean ball of radius $\frac{r}{3}$ inside the geodesic ball $B_{r/2}(x)$. We then have 
$$ a_1 \, \text{card}( \mathcal J) r^n \leq \sum_{x_i \in \mathcal J} \text{Vol}_g(B_{r/2}(x_i)) \leq c \text{Vol}_g( \mathcal T_r(\Sigma)) \leq c c_0 r^{n-k} \, .$$ 
Therefore, we must have $ \text{card}( \mathcal J) \leq B r^{-k}$ for some $B$ that is independent of $r$.

Then using Lemma \ref{LocalLp} we write 
\begin{align*}
\| \psi _\la \|_{L^{p}(\Sigma)}^{p}
&\le \sum_{x_i \in \mathcal J} \| \psi_\la\|_{L^{p}(B_{r/2}(x_i) \cap \Sigma)}^{p}
\\
&\le C_1^p \la^{p \delta} \,  r^{-\frac{p}{2}}\sum_{x_i \in \mathcal J }\| \psi_\la\|_{L^2(B_{r}(x_i))}^{p}
\\
&\le C_1^p \la^{\delta p}  \,  r^{-\frac{p}{2}}\,  \Bigl(\sup_{x_i  \in \mathcal J}
\| \psi _\la\|_{L^2(B_{r}(x_i))}^{p-2}\Bigr)\sum_{x_i \in \mathcal J}
\| \psi_\la\|_{L^2(B_{r}(x_i))}^2
\end{align*}
Corollary \ref{GlobalLp} follows because $$\sum_{x_i \in \mathcal J}
\| \psi_\la\|_{L^2(B_{r}(x_i))}^2 \leq c \| \psi_ \la \| ^2 _{\mathcal T _r (\Sigma)}.$$
Theorem \ref{LpEstimatesSS} follows by observing that under the $L^2$ assumption of this theorem and also our observation that $\text{card}(\mathcal J)< Br^{1-n}$, we have
\begin{align*} \| \psi _\la \|_{L^{p}( \Sigma )}^{p}
&\le C_1^p \la^{p \delta} \,  r^{-\frac{p}{2}} \sum_{x_i \in \mathcal J }\| \psi_\la\|_{L^2(B_{r}(x_i))}^{p} \\  &\leq C_2^p r^{-k} r^{\frac{p}{2}(n-1)} \lambda^{p \delta}. \end{align*}
\end{proof}

\begin{proof} [\textbf{Proof of Theorem \ref{LpQENC}}]
As discussed in the introduction, this theorem is resulted immediately from Theorem \ref{LpEstimatesSS} combined with the results of \cite{BuGeTz, Hu, ChSo} on the spectral cluster operators, and also the small scale QE results  (\ref{QENC}) of \cite{HeRi}.
\end{proof}

\begin{proof} [\textbf{Proof of Theorem \ref{LpQE}}]
In the boundaryless case $\partial X =\emptyset$, this theorem follows easily from Theorem \ref{LpEstimatesSS} and the below lemma combined with the quantum ergodicity results of \cite{Sh, CdV, Ze87}. We will prove this theorem for the case of  $\partial X \neq \emptyset$ in Section \ref{Boundary}. 
\end{proof}

\begin{lem} \label{QElemma} Let $(X,g)$ be a compact connected Riemannian manifold of dimension $n$, with or without boundary. When $\partial X \neq \emptyset$, we assume that $\partial X$ is piecewise smooth. Let $\{ \psi_{\la_j} \}_{j \in S}$ be a  sequence of eigenfunctions of $\Delta_g$ with eigenvalues $\{\lambda_j\}_{j \in S}$ such that for some $r^*< \text{inj}(X,g)$ and for all $r \in (0, r^*)$ and all $x \in X$
\begin{equation} \label{QEonX2}
\int_{B_r(x)} |\psi_{\la_j}|^2 \to \frac{\text{Vol}_g(B_r(x))}{\text{Vol}_g(X)}, \qquad  \lambda_j \xrightarrow{j \in S} \infty.
\end{equation}
Then there exists $r_0(g)$ such that for each $r \in (0, r_0(g))$ there exists $\Lambda_r$ such that for $ \lambda_j \geq \Lambda_r$ we have
$$ \int _{B_{r}(x)} | \psi_{\la_j}|^2 \leq K r^n, $$ uniformly for all $x \in X$. Here, $K$ is independent of $r$, $j$, and $x$. 
 \end{lem}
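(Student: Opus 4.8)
The plan is to turn the pointwise-in-$x$ convergence \eqref{QEonX2} into a bound that is uniform in $x$, at the cost of replacing the asymptotic equality by an inequality with a fixed constant $K$. The key idea is a compactness/covering argument: cover $X$ by finitely many balls of radius $r$ so that a ball of radius $r$ centered anywhere is contained in one of the balls of radius $2r$ from a slightly larger cover, and then use the convergence at the centers of this finite cover together with the doubling property of the Riemannian volume.

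First I would fix $r_0(g) < \frac{1}{3}\,\text{inj}(X,g)$ small enough that the volume of every geodesic ball $B_\rho(x)$ with $\rho \le 3 r_0(g)$ is comparable to $\rho^n$, say $a_1 \rho^n \le \text{Vol}_g(B_\rho(x)) \le a_2 \rho^n$ uniformly in $x$ (this uses compactness of $X$ and the piecewise-smooth boundary structure as in the proof of Theorem \ref{LpEstimatesSS}; near a regular boundary point one works in half-space coordinates, and the singular set $\mathcal S$ has positive codimension so balls near it still have volume $\gtrsim \rho^n$). Next, for a given $r \in (0, r_0(g))$, I would choose a finite set of points $\{y_1,\dots,y_M\} \subset X$, with $M = M(r)$, such that $X \subset \bigcup_{i=1}^M B_{r}(y_i)$; by the volume comparison $M(r)$ is finite. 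The point of the covering is: for any $x \in X$ there is an index $i$ with $d_g(x, y_i) < r$, hence $B_r(x) \subset B_{2r}(y_i)$, so that
\begin{equation*}
\int_{B_r(x)} |\psi_{\la_j}|^2 \;\le\; \int_{B_{2r}(y_i)} |\psi_{\la_j}|^2 .
\end{equation*}
Now apply \eqref{QEonX2} at each of the finitely many points $y_1,\dots,y_M$ with radius $2r < 2r_0(g) < r^*$: there is $\Lambda_r$ (depending on $r$ through the finitely many limits, which is why $\Lambda_r$ is allowed to depend on $r$) such that for $\la_j \ge \Lambda_r$ and every $i$,
\begin{equation*}
\int_{B_{2r}(y_i)} |\psi_{\la_j}|^2 \;\le\; 2\,\frac{\text{Vol}_g(B_{2r}(y_i))}{\text{Vol}_g(X)} \;\le\; \frac{2 a_2 (2r)^n}{\text{Vol}_g(X)} \;=\; \frac{2^{n+1} a_2}{\text{Vol}_g(X)}\, r^n .
\end{equation*}
Combining the two displays gives $\int_{B_r(x)}|\psi_{\la_j}|^2 \le K r^n$ with $K = 2^{n+1} a_2 / \text{Vol}_g(X)$, which is independent of $r$, $j$, and $x$, as required.

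The only genuine subtlety — the "hard part," though it is more bookkeeping than difficulty — is ensuring the volume comparison $\text{Vol}_g(B_\rho(y)) \asymp \rho^n$ holds \emph{uniformly} in $y \in X$ for the extrinsic balls $B_\rho(y) = \tilde B_\rho(y) \cap \tilde X$ of Definition \ref{PS}, including for centers near the corner set $\mathcal S$; this is exactly the same kind of estimate already invoked in the proof of Theorem \ref{LpEstimatesSS} (fitting a Euclidean half-ball of radius $\rho/3$ inside $B_{\rho/2}(y)$ near the regular boundary), and near $\mathcal S$ one uses that $\mathcal S$ is contained in a finite union of submanifolds of codimension $\ge 2$, so a ball centered near $\mathcal S$ still contains a fixed fraction of a Euclidean ball. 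Everything else is a finite covering argument plus the hypothesis \eqref{QEonX2} applied at finitely many points. Note it is essential that $\Lambda_r$ is permitted to depend on $r$: the cover, and hence the number of limits one must wait to hold, changes with $r$.
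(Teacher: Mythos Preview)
Your argument is correct and follows essentially the same route as the paper: cover $X$ by finitely many small balls, use the inclusion $B_r(x)\subset B_{2r}(y_i)$, apply the hypothesis \eqref{QEonX2} at the finitely many centers with radius $2r$, and bound $\text{Vol}_g(B_{2r}(y_i))$ above by $a\,r^n$. Two small remarks: (i) you should also impose $r_0(g)<r^*/2$ (not only $r_0(g)<\tfrac13\,\text{inj}(X,g)$) so that the assertion ``$2r<2r_0(g)<r^*$'' actually holds and \eqref{QEonX2} is applicable at radius $2r$; (ii) only the \emph{upper} volume bound $\text{Vol}_g(B_\rho(y))\le a_2\rho^n$ is used here, and this is immediate since $B_\rho(y)\subset \tilde B_\rho(y)$ in the ambient $(\tilde X,\tilde g)$---your discussion of the lower bound near $\mathcal S$ is not needed for this lemma.
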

We point out that this lemma is obvious when $x$ is fixed, however to have it work uniformly for all $x \in X$ we need to use a covering argument as follows. 

\begin{proof}First we choose $r_0(g) < \frac{r^*}{2}$ small enough so that for all $ r <r_0(g)$
$$  \text{Vol}(B_{2r}(x))  \leq a r^n,$$ for some positive $a$ that is independent of $r$ and $x$. We cover $(X, g)$ using geodesic balls $\{B_{r/2}(x_i) \}_ {x_i \in \mathcal I}$ such that $\text{card}\, (\mathcal I)$ is at most $C_0r^{-n}$, where $C_0$ depends only on $(X, g)$. For the existence of such a covering see for instance Lemma 2 of \cite{CM}.  Next for each $x_i \in \mathcal I$, by using (\ref{QEonX2}) for balls $B_{2r}(x_i)$, we can find $\Lambda_{i, r}$ large enough so that for $\lambda_j \geq \Lambda_{i, r}$
$$\int _{B_{2r}(x_i)} | \psi_{\la_j}|^2 \leq K r^n, $$
with $K= \frac{2a}{\text{Vol}(X)}$. We claim that $ \Lambda_r =\max_{i \in \mathcal I} \{ \Lambda_{i, r} \}$ would do the job for all $x$ in $X$. So let $x$ be in $X$ and $r$  be as above. Then $x \in B_{r/2}(x_i) $ for some $i \in \mathcal I$ and clearly one has $B_r(x) \subset B_{2r}(x_i) $.  This and the above inequality prove the lemma. 
\end{proof}

\section{Improved supnorms and the number of nodal domains}\label{PSQE}
In this section we prove Theorems \ref{LinftyQE} and \ref{NN}. The main technical obstacle is the presence of singular points on the boundary, which as we show can be overcome by the finite speed of propagation property of the solutions to the wave equation. Let us start with the proof of improved $L^\infty$ estimates.

\subsection{\textbf{Proof of Theorem \ref{LinftyQE} }} First let us recall the following two results of \cite{So02} and \cite{Xu} on the supnorm of spectral clusters.
\begin{theo}[Sogge \cite{So02} and Xu \cite{Xu}] \label{LinftyPi} Let $(X,g)$ be a compact connected smooth Riemannian manifold  of dimension $n$ with smooth boundary, and $\Pi_\la$ be the spectral cluster operator associated to $\Delta_g$ with Dirichlet or Neumann boundary conditions.  Then
$$ \sup_X | \Pi_\la (f) | \leq C \la^{\frac{n-1}{4}} || f  ||_{L^2(X)},$$
$$ \sup_X | \la^{-\frac12} \nabla \, \Pi_\la (f) | \leq C \la^{\frac{n-1}{4}} || f ||_{L^2(X)}, $$
where the constant $C$ depends only on $(X, g)$. 
\end{theo}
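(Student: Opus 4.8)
The plan is to reduce the $L^\infty$ and gradient estimates for $\Pi_\la$ on a manifold with smooth boundary to the already-known interior estimates via a combination of the finite speed of propagation for the wave equation and standard elliptic (boundary) regularity. First I would replace the sharp cutoff projector $\Pi_\la$ by a smoothed-out Fourier integral operator of the form $\rho(r_0(\sqrt\la - \sqrt{\Delta_g}))$ with $r_0$ a fixed small constant, exactly as in the construction of $A_{\la,r}$ in the proof of Lemma \ref{LocalLp}: choosing $\rho\in C^\infty$ with $\rho(0)=1$, $\rho\ge 1$ on $[-1,1]$ and $\supp\widehat\rho\subset(-r_0/2,r_0/2)$, one has $|\Pi_\la f|\le |\rho(r_0(\sqrt\la-\sqrt{\Delta_g}))(\Pi_\la f)|$ pointwise, and similarly for $\nabla\Pi_\la$, so it suffices to bound the smoothed operator applied to an $L^2$ function. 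This smoothed operator is represented via $\cos(t\sqrt{\Delta_g})$ with $t$ restricted to $|t|<r_0/2$, so its Schwartz kernel $K_\la(x,y)$ vanishes when $d_g(x,y)\ge r_0/2$.

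The key geometric step is then a case division according to the distance from $x$ to $\partial X$. If $d_g(x,\partial X)\ge r_0/2$, the kernel $K_\la(x,\cdot)$ is supported in a ball $B_{r_0/2}(x)$ lying in the interior, where the wave group and hence $\rho(r_0(\sqrt\la-\sqrt{\Delta_g}))$ agrees with the boundaryless construction on a compact extension $\widetilde X\supset X$ (finite speed of propagation again): the value at $x$ only sees a neighborhood where the manifold looks closed, so Sogge's classical interior bound $\sup|\rho(r_0(\sqrt\la-\sqrt{\Delta_g}))f(x)|\le C\la^{(n-1)/4}\|f\|_{L^2}$ applies verbatim (and likewise for $\la^{-1/2}\nabla$, which is again an order-zero FIO in the relevant range). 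If $d_g(x,\partial X)<r_0/2$, one works in boundary normal (Fermi) coordinates near $x$; here one invokes the boundary version of Sogge's $L^\infty$ estimate, i.e. the sup-norm bounds for spectral clusters of the Dirichlet/Neumann Laplacian, which follow from the Melrose–Taylor / Grieser parametrix for the half-wave (or wave) group near a boundary point and the associated oscillatory-integral / stationary-phase analysis; the gradient bound in the boundary layer is handled by the same parametrix, differentiated once, together with the rescaling $\la^{-1/2}$. Since the manifold is compact, only finitely many coordinate charts are needed and the constants are uniform.

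The main obstacle I expect is the boundary case: away from the boundary everything is a soft consequence of finite speed of propagation plus the classical interior result, but near $\partial X$ one genuinely needs the hard microlocal analysis of the wave group in the presence of a boundary — the glancing/diffractive behavior captured by the Melrose–Taylor parametrix — and the verification that the resulting oscillatory integrals obey the same $\la^{(n-1)/4}$ bound (with no loss, because for $L^\infty$ there is no whispering-gallery obstruction, unlike the $L^p$ restriction setting discussed after Corollary \ref{BoundaryCor}). In practice this is exactly what Sogge \cite{So02} and Xu \cite{Xu} carried out, so for the purposes of this paper I would simply cite those works for the boundary estimate and supply only the finite-speed-of-propagation reduction that glues the interior and boundary regimes together uniformly in $x$.
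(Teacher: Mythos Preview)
The paper does not prove this theorem at all: it is stated purely as a citation of known results of Sogge \cite{So02} and Xu \cite{Xu}, introduced with ``First let us recall the following two results,'' and no argument is given. Your proposal ultimately lands in the same place --- you conclude by saying you would ``simply cite those works'' --- so in that sense you agree with the paper; the detailed sketch you supply (finite-speed-of-propagation reduction, interior vs.\ boundary case split, Melrose--Taylor parametrix near the boundary) is reasonable background but goes well beyond what the paper actually does, which is nothing more than the citation.
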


Now suppose $(X, g)$ is a manifold with piecewise smooth boundary, isomorphically embedded in a compact Riemannian manifold $(\tilde X, g)$ without boundary of the same dimension (see definition \ref{PS}). For  each $s \in [0,  \text{inj}(X, g)/2]$ we choose a family $\{(X_s, g)\}$ of compact submanifolds of $( \tilde X, g)$, with smooth boundary for $s>0$,  such that
\begin{equation} \label{Xs} \begin{cases} X_0=X, \quad X \subset X_s, \\
$$ X \backslash \mathcal T_s( \mathcal S) = X_s \backslash \mathcal T_s( \mathcal S), \end{cases} \end{equation} where $\mathcal T_s( \mathcal S) =\{ x \in \tilde X; \, d(x, \mathcal S) < s \}$. This is possible by smoothing out $X$ in $\tilde X$ near its corners. We note that since the metrics on $X$ and $\{X_s\}$ are the restrictions of the metric $g$ on $\tilde X$, we have used the same notation for all of them. Let $\Delta_s$ be the Laplace-Beltrami operator on $(X_s, g)$ with Dirichlet or Neumann boundary conditions and consider the family of operators $A_{\la, r}(s)$ in (\ref{A}) defined by
$$A_{\la,r}(s)  = \frac1\pi \int_{-\infty}^\infty r^{-1} \hat \rho(r^{-1}t) \, e^{it \sqrt{\la}}
\cos(t \sqrt{\Delta_s}) \, dt, $$
where $\rho \in C^ \infty(\R)$, $\rho(0)=1$, and $\supp \hat \rho(t) \subset [ -\frac12, \frac12]$.  Now suppose $\psi_\la$ is an eigenfunction of $\Delta_0$. Then by repeating the argument of the proof of Lemma \ref{LocalLp}, we have 
$$A_{\la,r}(0) \psi_\la =\big (1+\rho(2r \sqrt{\la})\big ) \, \psi_\la, $$
and because $\rho \geq 0 $ we must have
\begin{equation}\label{above3} \| \psi_\la \|_{L^{\infty}(B_{r/2}(x))}\le \| A_{\la,r}(0) \psi_\la \|_{L^{\infty}(B_{r/2}(x))}. \end{equation} Since the finite speed of propagation of $\cos \big (t \sqrt{\Delta_0} \big )$ also holds on manifolds with piecewise smooth boundary, $\cos \big (t \sqrt{\Delta_0}\big )(x,y)$
vanishes if $d_g(x, y) > |t|$. Hence because  $\hat \rho (t) =0$ for $|t| \geq \frac{1}{2}$, we have
$$A_{\la,r}(0)(x,y)=0, \quad \text{if } \, \, d_g(x,y)> \frac{r}{2}. $$
which implies that for all $ f \in C^\infty(X)$ 
\begin{equation}\label{above4} \|A_{\la,r}(0)f\|_{L^\infty(B_{r/2}(x))} = \|A_{\la,r}(0)(f|_{B_r(x)})\|_{L^\infty(B_{r/2}(x))}. \end{equation}  Next we choose $s>0$ small enough so that $ X \backslash \mathcal T_{4s}(\mathcal S)$ has a non-empty interior. We claim that for $x \in X$ and $y \in X \backslash \mathcal T_{2s}(\mathcal S)$ as long as $d(x, y) \leq  |t| < \frac{s}{2}$, we have 
\begin{equation} \label{Claim} \cos \big (t \sqrt{\Delta_0} \big )(x, y) = \cos \big (t \sqrt{\Delta_s} \big )(x, y). \end{equation} The proof of this, which we are about to present, is basically the same as the proof of finite speed of propagation. To do this, let $h$ be an arbitrary real-valued smooth function supported in $X \backslash \mathcal T_{2s}$ and define
$$ u(t, x) =   \left ( \cos \big (t \sqrt{\Delta_0} \big ) - \cos \big (t \sqrt{\Delta_s} \big ) \right) (h).$$ By the properties of $X_s$, it is clear that $u$ satisfies
$$ \begin{cases} \partial_t^2 u(t, x) = -\Delta_0 u(t, x), \quad t \in \IR, \; x \in X, \\
u(0, x) =0,  \quad  \; x \in X,\\
\partial_t u(0, x) =0, \quad  \; x \in X,  \\
\forall t \in \IR, \; \forall x \in \partial X \backslash \mathcal T_{s} (\mathcal S): \; \begin{cases} u(t, x) =0 \; \text{(Dirichlet case)}, \\  \partial_n u(t, x) =0\; \text{(Neumman case)}. \end{cases} \end{cases} $$ Then for $t >0$  and $y$ in the support of $h$, consider the local energy $$ E(t) = \int_{B_{s-t}(y)\cap X} |\partial_t u(t, x)|^2 + | \nabla_g u(t, x) |^2  \; d_gv,$$ where geodesic balls are defined in $\tilde X$.  Differentiating $E(t)$, and using the coarea formula and the divergence theorem, we obtain
\begin{align*} 
E'(t) & = - \int_{S_{s-t}(y)\cap X} |\partial_t u|^2  + | \nabla_g u|^2 \; d_g \sigma
\\ & \quad  + 2 \int_{B_{s-t}(y)\cap X} (\partial_t^2 u)( \partial_t u) + < \nabla_g \partial_t u, \nabla_g u>  \; d_gv\\  &= - \int_{S_{s-t}(y) \cap X} |\partial_t u|^2  + | \nabla_g u |^2 \; d_g \sigma + 2 \int_{\partial \big(B_{s-t}(y) \cap X \big )} (\partial_t u)(\partial_n u) \; d_g \sigma.
\end{align*}
We then note that $\partial \big(B_{s-t}(y) \cap X \big ) = (S_{s-t}(y) \cap X ) \cup (\partial X \cap B_{s-t}(y) )$. Since $y \in X \backslash \mathcal T_{2s}(\mathcal S)$, the set $\partial X \cap B_{s-t}(y)$ never intersects $ \mathcal T_s ( \mathcal S)$ and therefore we can use the boundary conditions to see that with either Dirichlet or Neumann boundary conditions, $ \partial_t u\, \partial_n u$ vanishes on  $ \partial X \cap B_{s-t}(y)$. As a result, 
$$ E'(t) = - \int_{S_{s-t}(y) \cap X} \left ( |\partial_t u|^2  + | \nabla_g u |^2  - 2(\partial_t u)(\partial_n u) \right ) \, d_g \sigma \leq 0 \; .$$ Since $E(0)=0$, we must have $E(t)=0$, which implies that $u(t, x)=0$ for all $t$, $x \in X $, and $y  \in X  \backslash \mathcal T_{2s}(\mathcal S)$ with $d(x, y) < s -|t|$. Since $h$ is arbitrary, if we choose $|t| < s/2$, the claim (\ref{Claim}) follows. Consequently if $r <s/4$, we have
$$ A_{\la, r}(0)(x, y) = A_{\la, r}(s)(x, y), \quad x \in X, \, y  \in X  \backslash \mathcal T_{2s}(\mathcal S), \; d(x, y) < 2r,$$ 
$$ \nabla_x A_{\la, r}(0)(x, y) = \nabla _x A_{\la, r}(s)(x, y), \quad x \in X, \, y  \in X  \backslash \mathcal T_{2s}(\mathcal S), \; d(x, y) < 2r.$$This identity tells us that if we choose any $x \in X \backslash \mathcal T_{3s}(\mathcal S)$, any $r < s/4$, and any $f \in C^\infty(\tilde X)$ we have
\begin{equation} \label{above1} \|A_{\la,r}(0)(f|_{B_r(x)})\|_{L^\infty(B_{r/2}(x))}=\|A_{\la,r}(s)(f|_{B_r(x)})\|_{L^\infty(B_{r/2}(x))}, \end{equation}
\begin{equation} \label{above2}  \| \nabla \left ( A_{\la,r}(0)(f|_{B_r(x)}) \right )\|_{L^\infty(B_{r/2}(x))}=\| \nabla \left ( A_{\la,r}(s)(f|_{B_r(x)})\right )\|_{L^\infty(B_{r/2}(x))}. \end{equation} However, since $X_s$ has a smooth boundary we can use Theorem \ref{LinftyPi} and plug it in the argument of \cite{So15}, or in the proof of our Lemma \ref{LocalLp}, to obtain
$$ \|A_{\la,r}(s)f\|_{L^ \infty (X_s)} \le C_s r^{-\frac{1}{2}}\la^{\frac{n-1}{4}}\|f\|_{L^2(X_s)}, $$
$$ \| \la^{-\frac12}\nabla A_{\la,r}(s)f\|_{L^ \infty (X_s)} \le C_s r^{-\frac{1}{2}}\la^{\frac{n-1}{4}}\|f\|_{L^2( X_s)}, $$
for all $f \in L^2(X_s)$. If we use these inequalities with $f$ replaced by $f|_{B_{r/2}(x)}$ where $x  \in X \backslash \mathcal T_{3s}(\mathcal S)$, then using (\ref{above3}), (\ref{above4}), (\ref{above1}), and (\ref{above2}) we arrive at
$$||\psi_{\lambda} ||_{L^\infty(B_{r/2}(x))} \leq C_s r^{-\frac{1}{2}} || \psi_{\lambda}||_{L^2(B_{r}(x))} \lambda^\frac{n-1}{4},  $$
$$ || \la^{-\frac12} \nabla \psi_{\lambda} ||_{L^\infty(B_{r/2}(x))} \leq C_s r^{-\frac{1}{2}} || \psi_{\lambda}||_{L^2(B_{r}(x))} \lambda^\frac{n-1}{4} .  $$ We must emphasize that these estimates are for the eigenfunctions of $\Delta_0$ (with Dirichlet or Neumann boundary conditions) on $(X, g)$, which has a non-smooth boundary, and are only valid for $s<s_0$ (where $s_0$ depends only on $(X, g)$), $x  \in X \backslash \mathcal T_{3s}(\mathcal S)$, and $\la^{-\frac12} \leq r <s/4$. The constant $C_s$ depends on $s$ and is independent of $r$, $\la$, and $x$.

Now let us assume the billiard flow on $(X, g)$ is ergodic. Then by the QE theorem of \cite{ZZ}, for any ONB $\{ \psi_{\la_j} \}_{ j \in \mathbb N}$ of eigenfunctions of $\Delta_0$ with Dirichlet or Neumann boundary conditions, there exists $S \subset \IN$ of full density such that (\ref{QEonX2}) holds. Hence by Lemma \ref{QElemma}, there exists $r_0$ dependent only on $(X, g)$ such that for each $r \in (0, r_0(g))$ there exists $\Lambda_r$ such that for $ \lambda_j \geq \Lambda_r$  ($j \in S$)
$$ \int _{B_{r}(x)} | \psi_{\la_j}|^2 \leq K r^n, $$ where $K$ is independent of $r$, $j$, and $x$. Applying these local $L^2$ estimates to the local $L^\infty$ estimates given above, we get for $j \in S$,  $x  \in X \backslash \mathcal T_{3s}(\mathcal S)$, and $\la_j^{-\frac12} \leq r <s/4$:
\begin{align} ||\psi_{\lambda_j} ||_{L^\infty(B_{r/2}(x))} \leq K^{\frac12}C_s r^{\frac{n-1}{2}}  \lambda_j^\frac{n-1}{4}, \label{local1}   \\ \label {local2} || \la_j^{-\frac12} \nabla \psi_{\lambda_j} ||_{L^\infty(B_{r/2}(x))} \leq K^\frac12 C_s r^{\frac{n-1}{2}} \lambda_j^\frac{n-1}{4} .  \end{align}  Without the loss of generality we assume that $C_s$ and $\Lambda_r$ are continuous monotonic functions with $C_s \to \infty$ as $s \to 0$ and $\Lambda_r \to  \infty$ as $r \to 0$. We also assume that $\Lambda_r \geq r^{-2}$. Then we choose $r(s)$ a continuous monotonic function of $s$ on $(0, s_0)$ such that $r(s_0) < r_0$, $r(s) < s/4$ and
$$ r(s)^{\frac{n-1}{2}} C_s \to 0, \quad \text{as} \quad s \to 0.$$  We note that there exists $j_0 \in S$ such that for all $j >  j_0$ we have $ \la_j \geq \Lambda_{r(s_0)}$. We discard all $ j \leq j_0$ from the set $S$, noting that the remaining set is still of full density (we call it $S$ again). Then for each $j \in S$ we define $s_j$ by $\Lambda_{r(s_j)}= \la_j$.  By the choice of $s_j$ and $\Lambda_r$, the conditions $ \la_j \geq \Lambda_{r(s_j)}$ and $ \la_j^{-1/2} \leq r(s_j)$ are automatically satisfied, and  Theorem \ref{LinftyQE} follows by (\ref{local1}) and (\ref{local2}) if we choose $\ep_j = 3 s_j$.   

\subsection{ \textbf{Proof of Theorem \ref{NN}; Number of nodal domains}}
We refer the reader to the paper \cite{ZeJJ} of Jung- Zelditch for the complete details of the facts we use in this section. Here we only present the necessary modifications that are needed to extend their results to all ergodic billiards with piecewise smooth boundary. Let us recall from \cite{ZeJJ} the following facts. 

\begin{prop}\label{JJZ} Let $(X, g)$ be a compact connected smooth Riemannian manifold of dimension $n=2$ with smooth boundary, parametrized with arc length. Let $\{ \psi_{\la_j} \}_{j \in S}$ be a sequence of eigenfunctions of $\Delta_g$ with Dirichlet or Neumann boundary conditions with eigenvalues $\{ \la_j \}_{j \in S}$, and $\{\psi^b_{\la_j}(s)\}_{j \in S}$ be its associated Cauchy data on $\partial X$  defined by 
\begin{equation} \label{traces} \psi^b_{\la_j}(s)= \begin{cases} \psi_{\la_j}(s), \qquad \quad \quad \; \; \text{in the Neumann case},\\ \la_j^{-1/2} \partial_n \psi_{\la_j} (s), \quad \text{in the Dirichlet case} .\end{cases} \end{equation} Let $ \Sigma \subset \partial \Omega$ be a closed connected arc.  In addition, assume
\begin{itemize}
\item[(a)] For all non-negative $f \in C^\infty_0( \partial X)$, with $\text{supp}(f) \subset \Sigma$, there exist $\Lambda_{f}$ and $c_1>0$ independent of $\Sigma$ and $\la_j$,  such that $$ \int_ \Sigma f(s) | \psi_{\la_j}(s) |^2 \, ds  \geq c_1 \int_ \Sigma f(s) \, ds, \quad   \la_j \geq \Lambda_{f}. $$
\item[(b)] For all $f \in C^\infty_0( \partial X)$, with $\text{supp}(f) \subset \Sigma$, there exist $\Lambda'_{f}$ and $c_2>0$ independent of $\Sigma$ and $\la_j$, such that
$$ \left |\int_ \Sigma f(s) \psi_{\la_j}(s) \, ds \right |^2 \leq c_2 \la_j^{-\frac12} \int_ \Sigma |f(s)|^2 \, ds,  \quad   \la_j \geq \Lambda'_{f}. $$
\item[(c)] For $j \in S$ { $$\sup_\Sigma |\psi^b_{\la_j}| = o( \la_j^{\frac14}). $$}
\end{itemize}
Then for $j \in S$ $$ \sharp \{s \in \Sigma; \; \psi^b_{\la_j}(s)=0 \} \to \infty \quad \text{as} \quad \la_j \to \infty. $$ 
\end{prop}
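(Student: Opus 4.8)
The plan is a proof by contradiction that sets the three hypotheses against one another. The mechanism is the following: on any subinterval $J\subset\Sigma$ on which $\psi^b_{\la_j}$ keeps a constant sign one has, for any $0\le\chi\in C^\infty_0(\text{int}\,J)$,
\[
\Big|\int_\Sigma \chi\,\psi^b_{\la_j}\Big| \;=\; \int_\Sigma \chi\,|\psi^b_{\la_j}| \;\ge\; \frac{\int_\Sigma \chi\,|\psi^b_{\la_j}|^2}{\|\psi^b_{\la_j}\|_{L^\infty(\Sigma)}}
\]
by Cauchy--Schwarz, so hypothesis (a) forces this quantity to be $\gtrsim \|\psi^b_{\la_j}\|_{L^\infty(\Sigma)}^{-1}$, while (b) forces it to be $\lesssim \la_j^{-1/4}$; together these say $\|\psi^b_{\la_j}\|_{L^\infty(\Sigma)}\gtrsim \la_j^{1/4}$, contradicting (c). The remaining task is to produce such an interval $J$ with a test function that does not depend on $j$ in an uncontrolled way, and this is where a pigeonhole over a \emph{fixed} finite family of intervals comes in.

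In order, then: since $\sharp\{s\in\Sigma:\psi^b_{\la_j}(s)=0\}$ is at least the number of sign changes of $\psi^b_{\la_j}$ on $\Sigma$ (the maximal length of a chain of points on which $\psi^b_{\la_j}$ alternates in sign, each sign change producing a distinct zero by the intermediate value theorem), it suffices to show the number of sign changes tends to infinity along $S$. I would fix $N\in\IN$, choose $N+1$ pairwise disjoint closed subintervals $J_1,\dots,J_{N+1}\subset\text{int}\,\Sigma$ and, for each $\ell$, a fixed $\chi_\ell\in C^\infty_0(\text{int}\,J_\ell)$ with $0\le\chi_\ell\le 1$ and $\int_\Sigma\chi_\ell>0$, and then suppose for contradiction that $\psi^b_{\la_j}$ has at most $N$ sign changes on $\Sigma$ for infinitely many $j\in S$. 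For each such $j$, an elementary counting of sign blocks shows that on at most $N$ of the disjoint intervals $J_\ell$ can $\psi^b_{\la_j}$ take both signs, so there is $\ell=\ell(j)\in\{1,\dots,N+1\}$ with $\psi^b_{\la_j}$ of constant sign on $J_{\ell(j)}$, whence $\big|\int_\Sigma\chi_{\ell(j)}\psi^b_{\la_j}\big|=\int_\Sigma\chi_{\ell(j)}|\psi^b_{\la_j}|$.

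Since only the $N+1$ fixed functions $\chi_1,\dots,\chi_{N+1}$ ever occur, the thresholds $\Lambda_{\chi_\ell}$, $\Lambda'_{\chi_\ell}$ appearing in (a) and (b) can be replaced by a single $\Lambda^*=\Lambda^*(N)$. For $\la_j\ge\Lambda^*$, Cauchy--Schwarz and (a) give $\int_\Sigma\chi_{\ell(j)}|\psi^b_{\la_j}|\ge c_1\|\psi^b_{\la_j}\|_{L^\infty(\Sigma)}^{-1}\int_\Sigma\chi_{\ell(j)}$, while (b) gives $\big|\int_\Sigma\chi_{\ell(j)}\psi^b_{\la_j}\big|\le(c_2\la_j^{-1/2}\int_\Sigma\chi_{\ell(j)}^2)^{1/2}$. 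Combining these, and using that $\int_\Sigma\chi_\ell/(\int_\Sigma\chi_\ell^2)^{1/2}\ge\gamma>0$ uniformly over the finite family, I obtain $\|\psi^b_{\la_j}\|_{L^\infty(\Sigma)}\ge c_1\gamma c_2^{-1/2}\,\la_j^{1/4}$ for infinitely many $j\in S$, which contradicts (c). Hence no such $N$ works: only finitely many $j\in S$ have at most $N$ sign changes, so the number of sign changes of $\psi^b_{\la_j}$ — and therefore $\sharp\{s\in\Sigma:\psi^b_{\la_j}(s)=0\}$ — tends to $\infty$ as $\la_j\to\infty$ along $S$.

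The step I expect to be the real obstacle is exactly the tension between the $j$-dependence that the sign pattern of $\psi^b_{\la_j}$ tries to force on the test function and the fact that the thresholds in (a) and (b) depend on the test function. The tempting approach — test $\psi^b_{\la_j}$ against a single low-degree polynomial, or a smoothed sign function, interpolating its (at most $N$) sign changes — yields a genuinely $j$-dependent test function and would require \emph{uniform} forms of (a) and (b) over a compact family of test functions (these do hold, since the QER and Kuznecov estimates behind (a)--(b) have errors uniform for test functions bounded in a fixed $C^{k_0}$, but invoking them is an extra step). The pigeonhole over a fixed finite collection of intervals avoids this entirely, at the modest cost of detecting only the \emph{number} of sign changes rather than a quantitative rate, which is all the statement asks. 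A secondary, routine point is to work with sign changes throughout and invoke $\sharp(\text{zeros})\ge(\text{number of sign changes})$ only at the end, so that zeros of $\psi^b_{\la_j}$ at which no sign change occurs, and the (harmless for the conclusion) possibility that $\psi^b_{\la_j}$ has infinitely many zeros, cause no difficulty.
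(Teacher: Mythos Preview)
Your proof is correct and is essentially the Ghosh--Reznikov--Sarnak/Jung--Zelditch argument that the paper cites (its own proof is simply ``See pages 818--819 of \cite{ZeJJ}''). The only difference is the one you yourself flag: rather than multiplying a fixed bump by a degree-$N$ polynomial vanishing at the sign-change points (which is $j$-dependent and requires invoking the uniformity of the QER and Kuznecov constants over bounded $C^{k_0}$ families), you pigeonhole over $N+1$ fixed subintervals to locate a constant-sign interval, so only finitely many fixed test functions ever appear and the thresholds $\Lambda_f,\Lambda'_f$ cause no trouble. This is a clean and legitimate simplification of the same mechanism; one cosmetic remark is that the inequality $\int\chi|\psi^b|\ge \int\chi|\psi^b|^2/\|\psi^b\|_{L^\infty}$ is the pointwise bound $|u|\,\|u\|_\infty\ge |u|^2$, not Cauchy--Schwarz.
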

\begin{proof} See pages 818-819 of \cite{ZeJJ}. \end{proof}
Also in \cite{ZeJJ}, using a a topological argument based on \cite{GRS},  the authors proved that:
\begin{theo}\label{Topological} Under the initial assumptions of the above proposition (but not the additional assumptions (a)-(c)), one has
$$\text{the number of nodal domains of} \; \psi_{\la_j} \; \geq  \; \frac12 \sharp \{s \in \partial X; \; \psi^b_{\la_j}(s)=0 \} - c(X,g),$$
where $c(X,g)$ depends only on $(X,g)$. 
\end{theo}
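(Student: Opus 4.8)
The plan is to run the Euler--characteristic argument of Ghosh--Reznikov--Sarnak \cite{GRS}, in the boundary version used by Jung--Zelditch \cite{ZeJJ1, ZeJJ}: encode the nodal set together with $\partial X$ as a finite embedded graph, compute the number of its faces by Euler's formula, and exploit the fact that every zero of the boundary datum $\psi^b_{\la_j}$ forces a graph vertex of degree at least three.

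First I would set up the graph. Let $Z = \psi_{\la_j}^{-1}(0)$ and put $\mathcal{G} = Z \cup \partial X$, viewed as a graph embedded in the compact surface $X$. By the classical local structure of nodal sets of eigenfunctions, $\mathcal{G}$ is a finite graph: in $X \setminus \partial X$ it is a finite union of smooth arcs meeting only at finitely many singular points, at each of which the graph has degree at least four, and $Z$ meets $\partial X$ in finitely many points. Put a vertex at each interior singular point, at each point of $Z \cap \partial X$, and --- to obtain an honest CW--graph --- at one auxiliary point on each boundary circle or interior nodal loop that would otherwise carry no vertex. Write $q$ for the number of interior singular vertices, $d$ for the number of auxiliary vertices, and
\[
m \;=\; \sharp\{\, s \in \partial X : \psi^b_{\la_j}(s) = 0 \,\}.
\]
The faces of $\mathcal{G}$ in $X$ are precisely the nodal domains of $\psi_{\la_j}$; denote their number by $N$. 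Since $\partial X \subset \mathcal{G}$, Euler's formula --- applied on $X$, equivalently on the closed surface obtained by gluing a disc to each boundary circle, where the capping discs create extra faces that exactly cancel the corresponding change of Euler characteristic --- gives $V - E + N = \chi(X)$, that is, $N = \chi(X) - V + E$.

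Next I would identify the non-auxiliary boundary vertices with the zeros of $\psi^b_{\la_j}$ and bound the degrees. In the Neumann case $\psi^b_{\la_j} = \psi_{\la_j}|_{\partial X}$, whose zero set is exactly $Z \cap \partial X$. In the Dirichlet case $\psi_{\la_j}$ vanishes on all of $\partial X$; writing $\psi_{\la_j} = y\, w$ in boundary normal coordinates $(x,y)$ with $\{y=0\} = \partial X$ and $y \geq 0$ in $X$, one has $w|_{y=0} = \partial_n\psi_{\la_j}$ up to normalization, and the interior nodal arcs of $\psi_{\la_j}$ reach $\partial X$ precisely where $w$ vanishes on $\{y=0\}$, i.e.\ at the zeros of $\partial_n\psi_{\la_j}$, which are the zeros of $\psi^b_{\la_j}$. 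In either case the non-auxiliary vertices of $\mathcal{G}$ on $\partial X$ are exactly those $m$ zeros, and the local description --- together with an even (resp.\ odd) reflection of $\psi_{\la_j}$ across $\partial X$ in the Neumann (resp.\ Dirichlet) case, and boundary unique continuation to rule out degenerate configurations --- shows each of them has degree $\geq 3$: two edges along $\partial X$ and at least one emanating interior nodal arc. Interior singular vertices have degree $\geq 4$ and auxiliary vertices have degree $2$. Therefore $V = q + m + d$ and $2E = \sum_v \deg v \geq 4q + 3m + 2d$, so $E \geq 2q + \tfrac32 m + d$, and
\[
N \;=\; \chi(X) - V + E \;\geq\; \chi(X) - (q + m + d) + \big(2q + \tfrac32 m + d\big) \;=\; \chi(X) + q + \tfrac12 m \;\geq\; \tfrac12 m + \chi(X).
\]
Since $\chi(X)$ depends only on the topology of $X$, this is the claimed bound with $c(X,g) = \max(0, -\chi(X))$; observe that $d$ drops out, as it must, since inserting a degree-$2$ vertex changes neither $N$ nor $\chi(X) - V + E$.

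The step I expect to be the main obstacle is the structural input invoked above: that $\mathcal{G} = Z \cup \partial X$ is genuinely a finite graph, and that every zero of $\psi^b_{\la_j}$ contributes a vertex of degree at least $3$. For the interior arcs and singular points this is the standard local description of nodal sets; at $\partial X$ it requires the factorization $\psi_{\la_j} = y\,w$ in the Dirichlet case and an even reflection of $\psi_{\la_j}$ in the Neumann case, together with a boundary unique-continuation argument ruling out $\psi^b_{\la_j}$ vanishing to infinite order at a point of $\partial X$ or along an arc. Everything else --- the Euler count and the degree inequality --- is elementary, and it is precisely the lower bound $\deg \geq 3$ at boundary vertices (as opposed to $\deg \geq 2$ at a typical point of the graph) that produces the factor $\tfrac12$ in the statement.
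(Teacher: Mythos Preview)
Your argument is precisely the one the paper invokes: the paper's own proof is a citation to Theorem~6.3 of Jung--Zelditch \cite{ZeJJ}, which is exactly the Euler--characteristic/degree count of \cite{GRS} that you have reproduced, together with the remark that in the piecewise-smooth case one must additionally place a vertex at each corner of $\partial X$ (this only affects $c(X,g)$).

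One small correction is needed. The identity $V - E + N = \chi(X)$ you assert would require every face (nodal domain) to be a disc, which is not guaranteed. What actually holds is the inequality $N \ge \chi(X) - V + E$: on the capped surface $\hat X$ one has $\chi(\hat X) = (V - E) + \sum_j \chi(U_j)$ for the open faces $U_j$, and each connected open $U_j$ satisfies $\chi(U_j) \le 1$, so $V - E + (N + b) \ge \chi(\hat X) = \chi(X) + b$. Since you only need a lower bound on $N$, your degree estimate $E - V \ge q + \tfrac{1}{2}m$ then gives $N \ge \chi(X) + q + \tfrac{1}{2}m \ge \tfrac{1}{2}m + \chi(X)$ exactly as before, and the conclusion stands with $c(X,g) = \max(0,-\chi(X))$.
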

\begin{proof} See Theorem 6.3 of \cite{ZeJJ}. Although this result is only proved for surfaces with smooth boundary, it still follows even if $\partial X$ is piecewise smooth.  The only difference is that one needs to assign a vertex to each singular point on the boundary, which can change the constant $c(X, g)$. \end{proof}

Let us now discuss the additional assumptions in Proposition \ref{JJZ}. Assumption (a) holds in general for piecewise smooth ergodic manifolds by a result of Burq \cite{Bu} which shows a quantum ergodicity property for the boundary values $\psi^b_{\la_j}(s)$ (see also \cite{HaZe} where this is proved for Euclidean ergodic billiards). Hence given an ONB of eigenfunctions of $\Delta_g$ on such a manifold, we can find a full density subsequence  for which (a) holds. Condition (c), with $\Sigma= \partial X$, was proved in \cite{SoZe14} for non-positively curved manifolds with smooth concave boundary,  but in the more general case of ergodic manifolds with piecewise smooth boundary we can use our Theorem \ref{LinftyQE}. In the case of manifolds with smooth boundary, condition (b) follows from:
\begin{theo} [\cite{HHHZ}] \label{HHHZ} Let $(X, g)$ be a compact connected smooth Riemannian manifold of dimension $n \geq 2$ with smooth boundary. Let $\{ \psi_{\la_j} \}_{j \in \IN}$ be an ONB of eigenfunctions of $\Delta_g$ with Dirichlet or Neumann boundary conditions, and let $\{ \psi^b_{\la_j}(s) \}_{ j \in \IN}$ be its boundary traces defined in (\ref{traces}). Suppose $\rho(t) \in C^\infty (\R)$ is a function satisfying
$$ \rho(t) \geq 0,  \quad  \rho(0)=1, \quad \supp \hat \rho(t) \subset [-\frac12, \frac 12].$$ Then for $r_0$ sufficiently small and for all $ f \in C^\infty(\partial X)$
\begin{equation} \label{completenessN}   \frac{\pi}{2} \sum_{ j=1}^\infty \rho_0 \big (\sqrt{\la} - \sqrt{\la_j} \big ) \langle f, \psi^b_{\la_j} \rangle \psi^b_{\la_j}(x) = f(x) + o(1), \end{equation} where $\rho_0$ is a function whose Fourier transform is $ \hat \rho (r_0^{-1} t)$. Also $\langle \, ,  \, \rangle$ is the natural inner product in $L^2(\partial X)$, and the above convergence is in $L^2(\partial X)$. 
\end{theo}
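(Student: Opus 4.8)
The plan is to read the left-hand side as an operator $T_\lambda$ on $L^2(\partial X)$ applied to $f$ and to show that $T_\lambda\to\mathrm{Id}$ strongly on $C^\infty(\partial X)$. Write $\hat\rho_0(t)=\hat\rho(r_0^{-1}t)$, supported in $|t|\le r_0/2$, and $\rho_0(\sqrt\lambda-\sqrt{\lambda_j})=\tfrac1{2\pi}\int\hat\rho_0(t)\,e^{it(\sqrt\lambda-\sqrt{\lambda_j})}\,dt$. Let $\gamma$ denote the relevant boundary trace — the Dirichlet trace $\gamma_0$ in the Neumann case and the normal-derivative trace $\gamma_1$ in the Dirichlet case — with formal adjoint $\gamma^*$ a single- resp. double-layer distribution carried by $\partial X$. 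Using $\psi^b_{\lambda_j}=\gamma\psi_{\lambda_j}$ (Neumann) or $\psi^b_{\lambda_j}=\lambda_j^{-1/2}\gamma\psi_{\lambda_j}$ (Dirichlet) and the fact that $\{\psi_{\lambda_j}\}$ is an orthonormal basis, one rewrites
$$T_\lambda f=\frac\pi2\,\gamma\,m_\lambda(\Delta_g)\,\gamma^* f,\qquad m_\lambda(\Delta_g)=\begin{cases}\rho_0(\sqrt\lambda-\sqrt{\Delta_g})&\text{(Neumann)},\\[2pt]\rho_0(\sqrt\lambda-\sqrt{\Delta_g})\,\Delta_g^{-1}&\text{(Dirichlet)},\end{cases}$$
which for each fixed $\lambda$ is a smoothing operator on $X$; hence $T_\lambda$ is a well-defined smoothing operator on $\partial X$, and the content is the $\lambda\to\infty$ asymptotics of its Schwartz kernel. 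The $\Delta_g^{-1}$ in the Dirichlet case simply absorbs the two factors $\lambda_j^{-1/2}$ and only improves smoothing.

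Then I would represent $m_\lambda(\Delta_g)$ through the wave equation. Exactly as for the operator $A_{\la,r}$ in Lemma \ref{LocalLp}, the finite speed of propagation of $\cos(t\sqrt{\Delta_g})$ (valid on manifolds with boundary) shows that, modulo an $O(\lambda^{-\infty})$ operator, $\rho_0(\sqrt\lambda-\sqrt{\Delta_g})$ equals $\tfrac1\pi\int\hat\rho_0(t)e^{it\sqrt\lambda}\cos(t\sqrt{\Delta_g})\,dt$, whose kernel is supported in $\{d_g(x,y)\le r_0/2\}$; composing with $\Delta_g^{-1}$ in the Dirichlet case changes nothing essential. Choosing $r_0$ small enough that $r_0/2$ lies below the injectivity radius of the ambient closed manifold $(\tilde X,\tilde g)$, I localize near $\partial X$ by a partition of unity and pass to Fermi (boundary normal) coordinates, where the short-time reflected-ray parametrix for the Dirichlet/Neumann half-wave group applies — in the flat half-space $\mathbb{R}^n_+$ this is literally the method of images — its non-parametrix remainder contributing $O(\lambda^{-\infty})$ after multiplication by $e^{it\sqrt\lambda}\hat\rho_0(t)$, integration in $t$, and application of $\gamma$ in each variable. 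Applying $\gamma$ in each variable to the parametrix and integrating out the normal cotangent variable yields, modulo $O(\lambda^{-\infty})$, a tangential oscillatory integral
$$\frac{c}{(2\pi)^{n-1}}\int_{\mathbb{R}^{n-1}}e^{i(\langle x'-y',\eta\rangle-t\,q(y',\eta))}\,b(t,x',y',\eta)\,d\eta,$$
with phase $q$ positively homogeneous of degree one in $\eta$ and equal to $|\eta|_{g(y')}$ at $t=0$, and $b$ a classical symbol of order $0$; in the Dirichlet case the two normal derivatives from $\gamma_1$ produce a factor comparable to $|\eta|^2$ on the energy shell which is exactly undone by $\Delta_g^{-1}$, again leaving order $0$.

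Next I would evaluate this as $\lambda\to\infty$. The $t$-integral gives $\int\hat\rho_0(t)e^{it(\sqrt\lambda-q)}dt=2\pi\,\rho_0(\sqrt\lambda-q(y',\eta))$, concentrated on the shell $q(y',\eta)=\sqrt\lambda$; there $b$ tends to its on-diagonal value, and a change of variables $q\mapsto u$ reduces the remaining $\eta$-integral to a tangential Fourier multiplier whose symbol is, up to $o(1)$, a one-dimensional integral $\int_{\mathbb{R}}\rho_0(\sqrt\lambda-u)\,w(u,\eta)\,du$ with $0\le w\le1$ and $w(u,\eta)\to1$ as $\lambda\to\infty$ for each fixed $\eta$. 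By dominated convergence this tends to $\int_{\mathbb{R}}\rho_0$; together with the half-line Plancherel constant $\tfrac2\pi$ coming from the model (the sine transform for Dirichlet, the cosine transform for Neumann, which is what the constant $c$ records and which the $\tfrac\pi2$ in the statement is designed to cancel) the overall constant is $\int_{\mathbb{R}}\rho$, equal to $1$ under the normalization of $\rho$ adopted in \cite{HHHZ}. Since the symbol of $T_\lambda$ is also uniformly bounded (by $\int|\rho_0|$), for a fixed $f\in C^\infty(\partial X)$ its rapidly decaying tangential Fourier transform lets dominated convergence conclude $T_\lambda f\to f$ in $C^\infty(\partial X)$, hence in $L^2(\partial X)$; reassembling the coordinate patches gives the global statement.

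The hard part will be the behavior near the glancing set, where rays are tangent to $\partial X$ and the method-of-images parametrix degenerates (this is where a Melrose–Taylor type parametrix would be needed). What rescues the argument is that only the leading order is required: one splits the $\eta$-integration into the transversal region $|\eta|_g\le(1-\delta)\sqrt\lambda$, where the reflected-ray parametrix is valid and the computation above applies verbatim, and a $\delta$-collar around the glancing/evanescent region, whose contribution is $O(\delta)$ uniformly in $\lambda$ — controlled by the a priori Rellich–Pohozaev bound $\|\lambda^{-1/2}\partial_n\Pi_\lambda\|_{L^2(X)\to L^2(\partial X)}=O(1)$ (in the Neumann case $\|\Pi_\lambda\|_{L^2(X)\to L^2(\partial X)}=O(\sqrt\lambda)$ with the compensating $\lambda^{-1/2}$), which is the same uniform boundedness underlying assumption (b) of Proposition \ref{JJZ} — and then letting $\lambda\to\infty$ first and $\delta\to0$ afterward. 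The only other delicate point is the bookkeeping of the universal constant, which the explicit half-space computation sketched above pins down.
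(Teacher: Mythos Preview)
The paper does not prove this theorem: it is quoted from \cite{HHHZ} and used as a black box. After stating it, the paper only explains how to transplant the conclusion to a manifold with piecewise smooth boundary when $\supp f\subset\Sigma\subset\partial X\setminus\mathcal S$, by the finite-speed-of-propagation identity (\ref{Claim}) comparing $\cos(t\sqrt{\Delta_g})$ on $X$ and on the smoothed manifold $X_s$. So there is no ``paper's own proof'' to compare your attempt to.

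As a sketch of the argument in \cite{HHHZ} your outline is in the right spirit (rewrite $T_\lambda$ via the wave group, build a short-time parametrix near $\partial X$, compute the leading symbol on the cosphere bundle of $\partial X$), but the step you flag as ``the hard part'' is indeed a genuine gap as written. The claim that the $\delta$-collar around glancing contributes $O(\delta)$ uniformly in $\lambda$ does \emph{not} follow from the Rellich--Pohozaev bound you invoke: that bound controls $\|\psi^b_{\lambda_j}\|_{L^2(\partial X)}$ globally, not the microlocal mass of $\psi^b_{\lambda_j}$ in a $\delta$-shell of the tangential cosphere bundle. To turn an a priori $L^2$ bound into an $O(\delta)$ estimate on a frequency annulus one needs some equidistribution or at least a pointwise (in $\eta$) symbol bound, and near glancing the reflected-ray parametrix you are using degenerates precisely where this control is required. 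In \cite{HHHZ} this is handled by a more careful analysis (of Melrose--Taylor/Airy type near glancing, or by working with boundary layer operators and their exact relation to the half-wave group), not by the coarse Rellich bound. A second, smaller issue is the constant: the hypotheses $\rho\ge0$, $\rho(0)=1$, $\supp\hat\rho\subset[-\tfrac12,\tfrac12]$ impose no constraint on $\int\rho$, so your sentence ``equal to $1$ under the normalization of $\rho$ adopted in \cite{HHHZ}'' is an assumption you have not verified against the statement as written here; the constant actually comes out of the precise form of the boundary parametrix, not from $\int\rho$.
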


\begin{rema} In fact in the Dirichlet case it is more convenient for us to use the following version
\begin{equation} \label{completenessD} \frac{\pi}{2} \sum_{ j=1}^\infty \rho_0 \big (\sqrt{\la} - \sqrt{\la_j} \big ) \langle f, \partial_n \psi_{\la_j} \rangle \partial_n \psi_{\la_j}(x) = f(x) \la + o(\la). \end{equation} We prefer this version because its related to the operator $\cos (t \sqrt{ \Delta_g})$ which satisfies the finite speed of propagation, as opposed to the operator $\frac{\cos (t \sqrt{ \Delta_g})}{ \sqrt{ \Delta_g}}$ involved in (\ref{completenessN}). 
\end{rema}

Before showing that Theorem \ref{HHHZ} also works for manifolds with piecewise smooth boundary for $f$ with $\supp(f) \subset \Sigma \subset \partial X \backslash \mathcal S$, let us make the observation that since for all $N \in \IN$, $$\rho_0 \big (\sqrt{\la} + \sqrt{\la_j} \big )= O_N (\la^{-N} \la_j^{-N}),$$ and since 
$$\frac1\pi \int_{-\infty}^\infty  \hat \rho(r_0^{-1}t) \, e^{it \sqrt{\la}}
\cos(t \sqrt{\Delta_g}) \, dt  =\rho_0(\sqrt{\la}-\sqrt{\Delta_g})+\rho_0(\sqrt{\la}+\sqrt{\Delta_g}), $$
(\ref{completenessN}) can be rewritten in the Neumann case as
\begin{equation} \label{N} \;  \frac12 \int_{-\infty}^\infty \int_{\partial X}  \hat \rho(r_0^{-1}t) \, e^{it \sqrt{\la}}
\cos(t \sqrt{\Delta_g})(x, y) f(y) \, d_g\sigma \, dt = f(x) + o(1), \end{equation} and in the Dirichlet case  (\ref{completenessD}) can be written as
\begin{equation} \label{D} \frac12 \int_{-\infty}^\infty \int_{\partial X}  \hat \rho(r_0^{-1}t) \, e^{it \sqrt{\la}}  \left (\partial_{n_x} \partial_{n_y} \cos(t \sqrt{\Delta_g})(x, y) \right ) f(y) \, d_g\sigma \, dt = f(x) \la + o(\la). \end{equation}
Now suppose $(X, g)$ is a manifold with piecewise smooth boundary and $\Sigma \subset \partial X \backslash \mathcal S$ is a compact submanifold of the regular part of the boundary.  Define $s = d_g(\Sigma, \mathcal S) / 4$ and let $(X_s, g)$ be the manifold with smooth boundary defined in (\ref{Xs}). We saw in (\ref{Claim}) that for $x \in X$ and $y \in X \backslash \mathcal T_{2s}(\mathcal S)$ as long as $d(x, y) \leq  |t| < \frac{s}{2}$, we have 
$$ \cos \big (t \sqrt{\Delta_g} \big )(x, y) = \cos \big (t \sqrt{\Delta_{g, s}} \big )(x, y). $$  Hence if we choose $r_0 <s$ we obtain (\ref{N}) and (\ref{D}), and therefore \footnote{See Remark \footnote{finalremark}.}
(\ref{completenessN}) and (\ref{completenessD}) for $(X, g)$ with non-smooth boundary  as long as $\supp f \subset \Sigma$ and $x \in \Sigma$. 

Finally, let us discuss how condition (b) can be obtained from these. Assume that $\rho$ satisfies the additional assumption $\rho(t) \geq \frac12$ on $[-\alpha ,\alpha]$ for some $\alpha >0$. Then using this property of $\rho$ and its non-negativity, after taking the inner product of both sides of (\ref{completenessN}) and (\ref{completenessD}) with $f \in C^\infty(\partial X)$ satisfying $\supp(f)  \subset \Sigma$, we get
$$\text{Neumann case:} \quad \sum_{|\sqrt{\la} - \sqrt{\la_j}| \leq \alpha/r_0  } | \langle f, \psi_{\la_j} \rangle |^2  \leq  \frac{4}{\pi} \int_{\partial X} |f(x)|^2 + o_f(1) $$
$$\text{Dirichlet case:} \quad  \sum_{|\sqrt{\la} - \sqrt{\la_j}| \leq \alpha/r_0  } | \langle f, \partial_n \psi_{\la_j} \rangle |^2  \leq  \frac{4 \la}{\pi} \int_{\partial X} |f(x)|^2 + o_f(\la).$$
By applying an extraction procedure to the the above estimates, as performed in \cite{ZeJJ}, one can see that for every $\tau \in (0, 1)$ there exists a subsequence of $\{ \psi_{\la_j} \}_{ j \in \N}$ of density $1 -\tau$ for which condition (b) of Proposition \ref{JJZ} holds. Since, as we discussed, all other conditions are satisfied for a full density subsequence, Theorem \ref{NN} follows by letting $\tau \to 0$. 

\begin{rema}\label{finalremark} In the above argument, when we were going back from (\ref{N})  and (\ref{D}) to (\ref{completenessN}) and (\ref{completenessD}) in the case of non-smooth boundary, we implicitly used the fact that if $\supp(f) \subset \Sigma$ then
$$ \| \sum_{ j=1}^\infty \rho_0 \big (\sqrt{\la} + \sqrt{\la_j} \big ) \langle f, \psi^b_{\la_j} \rangle \psi^b_{\la_j}(x) \|_{L^2(\Sigma)} = O (\la^{- \infty}).$$ In order to prove this we need to know that \{$\psi^b_{\la_j}$ \} is polynomially bounded on $\Sigma$. In fact if in the proof of Theorem \ref{LinftyQE} we choose $r=\frac{s}{5}$, we get for $s>0$
$$ \sup_{ X \backslash \mathcal T_s( \mathcal S)}  | \psi_{\la_j}| \leq C'_s \la^{\frac{n-1}{4}} \quad \text{and}  \quad \sup_{ X \backslash \mathcal T_s( \mathcal S)} | \la_j^{-\frac12} \nabla \psi_{\la_j} | \leq C'_s \la^{\frac{n-1}{4}}. $$  
\end{rema}

\section{$L^p$ restrictions on manifolds with boundary} \label{Boundary}
In this section we give a proof of Corollary \ref{BoundaryCor} and also for the remaining part of Theorem \ref{LpQE} when $\partial X \neq \emptyset$.  Let $\Sigma \subset X \backslash \partial X$ be a compact submanifold and define $s = d ( \Sigma, \partial X) /4$. A  proof similar to that of Theorem \ref{LinftyQE} shows that for $x \in X$ and $y \in X \backslash \mathcal T_{2s}(\mathcal \partial X)$ as long as $d(x, y) \leq  |t| < \frac{s}{2}$, we have 
$$ \cos \big (t \sqrt{\Delta_g} \big )(x, y) = \cos \big (t \sqrt{\Delta_{\tilde g}} \big )(x, y), $$ where $(\tilde X, \tilde g)$ is a compact Riemannian manifold without boundary that contains $(X, g)$ as the closure of an open connected subset (see definition (\ref{PS2})).  By imitating the proof of Theorem \ref{LinftyQE}, we get that for $x  \in X \backslash \mathcal T_{3s}(\partial X)$, and for $\psi_\la$  any eigenfunction of $\Delta_g$ on $(X, g)$ (with Dirichlet or Neumann boundary conditions) with eigenvalue $\la \geq 1$, and any $r$ satisfying $\la^{-\frac12} \leq r <s/4$, we have $$||\psi_{\lambda} ||_{L^p(\Sigma \cap B_{r/2}(x))} \leq C_s r^{-\frac{1}{2}} || \psi_{\lambda}||_{L^2(B_{r}(x))} \lambda^{\delta(n, k, p)}. $$ Corollary \ref{BoundaryCor} follows by choosing $r =\frac{s}{5}$ and using a covering argument as in the proof of Theorem \ref{LpEstimatesSS}. Theorem \ref{LpQE} needs two additional ingredients: Lemma \ref{QElemma} and the QE result of \cite{ZZ}.

\section*{Acknowledgments}

Th author would like to thank Gabriel Rivi\`ere for their comments on the first draft.

\end{document}